\documentclass[11pt]{article}
\usepackage{ amssymb }
\usepackage{lipsum}

\usepackage{amsfonts}
\usepackage{amsthm}

\usepackage{epsfig}
\usepackage{graphicx}  
  
\usepackage{pdfsync}

\usepackage[utf8]{inputenc}

\usepackage{caption}
\usepackage{subcaption}
 
\usepackage{amsmath,amsfonts,amssymb,amsthm,epsfig,epstopdf,titling,url,array}

\usepackage{hyperref}

\theoremstyle{plain}
\newtheorem{thm}{Theorem}[section]
\newtheorem{lem}[thm]{Lemma}
\newtheorem{prop}[thm]{Proposition}
\newtheorem{cor}{Corollary}

\theoremstyle{definition}
\newtheorem{defn}{Definition}[section]

\theoremstyle{remark}

\theoremstyle{remark}

\usepackage{graphicx}
\usepackage{xcolor}
\definecolor{darkblue}{rgb}{0,0,0.5} 
\usepackage{transparent}

\begin{document}

\title{A new upper bound for the Asymptotic Dimension of RACGs.}

\author{Panagiotis Tselekidis}


\maketitle



\begin{abstract} 
Let $W_\Gamma$ be the Right-Angled Coxeter group with defining graph $\Gamma$. We show that the asymptotic dimension of $W_\Gamma$ is smaller than or equal to $dim_{CC}(\Gamma)$, the clique-connected dimension of the graph. 
As a corollary we show that $W_\Gamma$ is virtually free if and only if $dim_{CC}(\Gamma)=1$.
\end{abstract}

\tableofcontents

\section{Introduction}

Coxeter groups touch upon a number of areas of mathematics, such as representation theory, combinatorics, topology, and geometry. They are often considered as a playground for many open problems in Geometric Group Theory.

It is known by an isometric embedding theorem of Januszkiewicz (see \cite{TJ}) that Coxeter groups have finite asymptotic dimension. In particular, Januszkiewicz's theorem shows that for any Coxeter group $W_\Gamma$ with defining graph $\Gamma$ we have the following upper bound: $asdimW_\Gamma \leq \sharp V(\Gamma)$.\\
A lower bound for the asymptotic dimension of Coxeter groups was given by Dranishnikov in \cite{Dra.Cohom}, $vcd(W_\Gamma) \leq asdim W_\Gamma$.

Right-angled Coxeter groups (RACGs) are the simplest examples of Coxeter groups, in these, the only relations between distinct generators are commuting relations. In other words, RACGs are the Coxeter groups defined by RAAGs. Dranishnikov proved (see \cite{Dra08}) that the asymptotic dimension of RACGs is bounded from above by the dimension of their Davis complex.\\
\\
\textbf{Question.}
\emph{Is possible to determine the asympotitic dimension of a RACG from its defining graph?}\\

In many cases Dranishnikov's bound is far from being optimal. For example, if the defining graph $\Gamma$ is a clique with $n$ vertices, then by Dranishnikov's result we have that $asdim W_{\Gamma}\leq n$, however, $asdim W_{\Gamma}=0$.\\
The aim of this paper is to provide a new upper bound for the asymptotic dimension of RACGs treating some of the cases in which Dranishnikov's bound fails to be optimal. The main result of this article and its corollaries make some progress towards the previous question. 

We prove the following:

\begin{thm}\label{RACGthm1.1}
Let $W_\Gamma$ be the Right-Angled Coxeter group with connected defining graph $\Gamma$. Then
$$asdim W_\Gamma \leq dim_{CC} (\Gamma).$$
If $\Gamma$ is not connected, then $asdim W_\Gamma \leq max \lbrace 1, dim_{CC} (\Gamma) \rbrace .$
\end{thm}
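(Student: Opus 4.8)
The plan is to proceed by induction on the clique-connected dimension, using the Bass–Serre / graph-of-groups structure that the defining graph imposes on $W_\Gamma$. The crucial feature of right-angled Coxeter groups is that separating the graph $\Gamma$ along a suitable clique yields a decomposition of $W_\Gamma$ as an amalgamated product (or HNN extension) over a special subgroup $W_S$, where $S$ is that separating clique. Since a clique generates a finite special subgroup, such a splitting realizes $W_\Gamma$ as a group acting on a tree with finite edge stabilizers and vertex stabilizers that are themselves RACGs on strictly smaller induced subgraphs. This is precisely the setup where one invokes the Bell–Dranishnikov union/extension theorems, or more specifically the inequality $\asdim(G) \leq \max\{\asdim(G_v)\} + 1$ for finite graphs of groups, but we must be careful: the naive $+1$ would not give the sharp bound $dim_{CC}(\Gamma)$.

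**First I would** pin down the base case: when $dim_{CC}(\Gamma)=1$ the graph is (up to the clique-connected reduction) a tree-like object, so $W_\Gamma$ is virtually free and hence $\asdim W_\Gamma \leq 1$, matching the bound and simultaneously setting up the corollary. **Next I would** set up the inductive step. Assuming the theorem holds for all graphs of clique-connected dimension $< n$, I take a graph $\Gamma$ with $dim_{CC}(\Gamma)=n$ and use the definition of clique-connected dimension to locate a clique separator $C$ such that each piece of $\Gamma \setminus C$, together with $C$, has clique-connected dimension at most $n$, while the interaction across $C$ drops the dimension by one in the sense the recursion demands. The subgroups appearing as vertex groups are $W_{\Gamma_i}$ for the induced pieces, and by induction $\asdim W_{\Gamma_i} \leq dim_{CC}(\Gamma_i)$.

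**The hard part will be** arranging the asymptotic-dimension arithmetic so that the contribution of the clique $C$ does not cost a full $+1$ at every stage, which would blow the bound up to the number of separators rather than $dim_{CC}(\Gamma)$. The right tool is likely not the crude graph-of-groups estimate but a direct construction: I would build, at the level of the Davis complex or the Cayley graph, an explicit family of $\varepsilon$-Lipschitz colorings (a $(dim_{CC}(\Gamma)+1)$-coloring with uniformly bounded, controlled components) by gluing colorings of the pieces along the finite special subgroup $W_C$. Because $W_C$ is finite, its diameter is bounded and it contributes a bounded, rather than growing, correction, so the colorings of adjacent pieces can be matched up across the join without increasing the color count. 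The technical heart is verifying the uniform $R$-disjointness of the resulting color classes across all scales, i.e. that the bounded overlap region coming from $W_C$ can be absorbed into the existing colors rather than forcing a new one.

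**If the clique-connected dimension instead behaves like a recursive/minimax quantity** over all clique separations (which the name and the corollary strongly suggest), I would alternatively phrase the induction so that $dim_{CC}(\Gamma) = 1 + \min_C \max_i dim_{CC}(\Gamma_i \cup C)$ or a variant thereof, and then the $+1$ in the asymptotic-dimension bound for graphs of groups aligns exactly with the $+1$ in the definition — making the Bell–Dranishnikov splitting theorem do the work cleanly. I expect the real paper takes this second route, since it makes the bound tight essentially by construction; the obstacle then shifts entirely to proving that the separating sets guaranteed by $dim_{CC}$ really do induce honest splittings of $W_\Gamma$ over finite subgroups, and that the induced subgraphs are the correct vertex-group graphs.
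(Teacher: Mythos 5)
There is a genuine gap, and it sits at the center of your plan: you assume throughout that the separating sets underlying $dim_{CC}$ are cliques, so that every splitting has a \emph{finite} edge group $W_C$. That is not what the definition says, and no argument along those lines can work. In the paper, $CC(\Gamma)$ is computed from arbitrary vertex cuts $S$; the quantity attached to a cut is $m_C(FS^\Gamma(S))$, the number of maximal cliques needed to cover the full subgraph spanned by $S$, and that subgraph is in general not a clique, so the corresponding edge group $W_{FS^\Gamma(S)}$ is in general infinite. A concrete counterexample to your scheme is the $4$-cycle: there $dim_{CC}=2$, the group $W_\Gamma \cong (\mathbb{Z}_2 \ast \mathbb{Z}_2) \times (\mathbb{Z}_2 \ast \mathbb{Z}_2)$ is one-ended, so by Stallings' theorem it admits no splitting whatsoever over a finite subgroup, and the graph has no separating clique. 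Both your coloring construction ``glued along the finite special subgroup $W_C$'' and your final minimax recursion (which explicitly requires ``honest splittings of $W_\Gamma$ over finite subgroups'') break down at this example, which is precisely the kind of case the theorem must handle.

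The paper's actual argument runs differently: it inducts on $\sharp V(\Gamma)$, not on $dim_{CC}(\Gamma)$. One picks a minimal vertex cut $S$, i.e.\ one realizing $CC(\Gamma)=m_C(FS^\Gamma(S))$, and writes $W_\Gamma$ as an iterated amalgam of the parabolic subgroups $W_{E_i \cup FS^\Gamma(S)}$ (the components $E_i$ of $\Gamma\setminus FS^\Gamma(S)$ together with the cut) over the possibly infinite parabolic subgroup $W_{FS^\Gamma(S)}$. Monotonicity of $dim_{CC}$ under passing to full subgraphs controls the vertex groups, and the $+1$ in the graph-of-groups estimate $\asdim \leq \max\lbrace \asdim G_v, \asdim G_e +1\rbrace$ is absorbed not by finiteness of the edge group but by the strict combinatorial inequality $dim_{CC}(FS^\Gamma(S)) < dim_{CC}(\Gamma)$ for a minimal vertex cut (the paper's Theorem 4.4, which rests on $CC(\Gamma) < m_C(\Gamma)$). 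So the dimension drop you hoped to extract from finite edge groups is instead proved as an increasing property of $dim_{CC}$ itself along minimal separators. A secondary issue: your proposed base case is circular in spirit, since the statement that $dim_{CC}(\Gamma)=1$ forces $W_\Gamma$ to be virtually free is deduced in the paper as a corollary of this very theorem (together with Gentimis' result), not available as an input to it.
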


The clique-connected dimension of a finite graph, $dim_{CC}(\Gamma)$ (see subsection \ref{subsubsection3.1.1}), can be described as an index showing how much connected is the graph modulo cliques.
For example, if $\Gamma$ is a clique, then $dim_{CC}(\Gamma)=0$. In the case of $\Gamma$ being a clique, we have that $W_\Gamma$ is finite, so $asdim W_\Gamma = dim_{CC}(\Gamma)$.  
We will further show that if $dim_{CC}(\Gamma)\leq 2$, then $asdim W_\Gamma = dim_{CC}(\Gamma)$ (see proposition \ref{RACG5.31}).

Since there are cases where the dimension of the Davis complex $\Sigma(W_\Gamma)$ is smaller than $dim_{CC}(\Gamma)$ and other cases where $dim_{CC}(\Gamma)< dim \Sigma(W_\Gamma)$, we have as a corollary:

\begin{thm}\label{RACGthm1.2}
Let $W_\Gamma$ be the Right-Angled Coxeter group with defining graph $\Gamma$. Then
$$asdim W_\Gamma \leq min \lbrace dim_{CC} (\Gamma), dim\Sigma(W_\Gamma) \rbrace.$$
If $\Gamma$ is not connected then $asdim W_\Gamma \leq  max \lbrace 1, min \lbrace dim_{CC} (\Gamma) , dim\Sigma(W_\Gamma) \rbrace  \rbrace.$

\end{thm}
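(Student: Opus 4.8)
The plan is to derive Theorem~\ref{RACGthm1.2} directly from Theorem~\ref{RACGthm1.1} together with Dranishnikov's bound $\asdim W_\Gamma \leq \dim \Sigma(W_\Gamma)$ cited in the introduction. Since both of these are upper bounds for the same quantity $\asdim W_\Gamma$, the group's asymptotic dimension is bounded above by whichever of the two is smaller, which is exactly the content of taking a minimum. So the argument is essentially a one-line combination, with the only real work being to handle the connected and disconnected cases with the correct bookkeeping.

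First I would treat the connected case. Assuming $\Gamma$ is connected, Theorem~\ref{RACGthm1.1} gives $\asdim W_\Gamma \leq \dim_{CC}(\Gamma)$, while Dranishnikov's result gives $\asdim W_\Gamma \leq \dim \Sigma(W_\Gamma)$. Since $\asdim W_\Gamma$ is a single number bounded above by both, it is bounded above by their minimum, yielding
\[
\asdim W_\Gamma \leq \min\{\dim_{CC}(\Gamma),\ \dim\Sigma(W_\Gamma)\}.
\]
This is the first claimed inequality.

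Next I would treat the disconnected case. Here Theorem~\ref{RACGthm1.1} gives $\asdim W_\Gamma \leq \max\{1, \dim_{CC}(\Gamma)\}$ and Dranishnikov's bound still gives $\asdim W_\Gamma \leq \dim\Sigma(W_\Gamma)$. Taking the minimum of these two upper bounds produces
\[
\asdim W_\Gamma \leq \min\{\max\{1,\dim_{CC}(\Gamma)\},\ \dim\Sigma(W_\Gamma)\},
\]
and the remaining step is to verify that this equals the stated expression $\max\{1, \min\{\dim_{CC}(\Gamma),\dim\Sigma(W_\Gamma)\}\}$. This is a small lattice identity rather than anything geometric: one checks that for a disconnected graph $\dim\Sigma(W_\Gamma)\geq 1$ always holds (a disconnected defining graph forces the Davis complex to have dimension at least one, as $W_\Gamma$ splits as a nontrivial free product and is not finite), so the outer $\max$ with $1$ can be pushed inside the $\min$ without changing the value.

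I expect the only genuine obstacle to be this last bookkeeping step: confirming that $\dim\Sigma(W_\Gamma)\geq 1$ in the disconnected setting so that the two forms of the bound agree. This is why the statement is phrased with the $\max\{1,\cdot\}$ on the outside. Once that inequality is in hand, the two expressions $\min\{\max\{1,a\},b\}$ and $\max\{1,\min\{a,b\}\}$ coincide whenever $b\geq 1$, and the corollary follows immediately. Everything else is a formal consequence of combining the two previously established upper bounds, so no new geometric or combinatorial input is required.
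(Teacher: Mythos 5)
Your proposal is correct and matches the paper's approach: the paper presents this theorem as an immediate corollary of Theorem~\ref{RACGthm1.1} combined with Dranishnikov's bound $\asdim W_\Gamma \leq \dim \Sigma(W_\Gamma)$, giving no further argument. Your extra care in the disconnected case (checking $\dim\Sigma(W_\Gamma)\geq 1$ so that $\min\{\max\{1,a\},b\}=\max\{1,\min\{a,b\}\}$) is sound and only makes explicit the bookkeeping the paper leaves implicit.
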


As a corollary of theorem \ref{RACGthm1.1} we prove that: 
\begin{prop}\label{RACG1.3}
Let $W_\Gamma$ be the Right-Angled Coxeter group with connected defining graph $\Gamma$. If $dim_{CC}(\Gamma)\leq 2$, then $asdim W_\Gamma = dim_{CC}(\Gamma)$.
\end{prop}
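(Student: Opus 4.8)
The upper bound is exactly Theorem \ref{RACGthm1.1} (the graph is connected), so the whole content of the statement is the matching lower bound $\asdim W_\Gamma \geq dim_{CC}(\Gamma)$ under the hypothesis $d := dim_{CC}(\Gamma) \leq 2$. The plan is to argue by cases on the value $d \in \{0,1,2\}$, producing in each case a lower bound that meets the upper bound. If $d = 0$, then $\Gamma$ is a clique, so $W_\Gamma$ is finite and $\asdim W_\Gamma = 0 = d$. If $d = 1$, then $\Gamma$ is not a clique, so $W_\Gamma$ is infinite; since an infinite finitely generated group (whose Cayley graph is an unbounded geodesic space) has asymptotic dimension at least one, we get $\asdim W_\Gamma \geq 1$, which together with $\asdim W_\Gamma \leq 1$ from Theorem \ref{RACGthm1.1} forces $\asdim W_\Gamma = 1 = d$.

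The only substantive case is $d = 2$. Here Theorem \ref{RACGthm1.1} already gives $\asdim W_\Gamma \leq 2$, so it suffices to exclude $\asdim W_\Gamma \leq 1$. As above $W_\Gamma$ is infinite, ruling out $\asdim W_\Gamma = 0$, so the remaining possibility to eliminate is $\asdim W_\Gamma = 1$. Right-angled Coxeter groups are finitely presented, so I would invoke Gentimis's theorem that a finitely presented group of asymptotic dimension one is virtually free: if $\asdim W_\Gamma = 1$ then $W_\Gamma$ is virtually free, and the characterization of virtual freeness (virtually free $\iff dim_{CC}(\Gamma) = 1$) forces $dim_{CC}(\Gamma) = 1$, contradicting $d = 2$. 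Hence $\asdim W_\Gamma \geq 2$, and equality follows.

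The main obstacle is precisely this $d = 2$ lower bound: one must genuinely distinguish asymptotic dimension $2$ from $1$, a distinction the coarse upper bounds do not detect directly. Two routes are available, and I would keep both in mind. The first is the one sketched above, combining Gentimis's rigidity theorem with the paper's own virtual-freeness corollary; here one must check that these inputs are logically independent of Proposition \ref{RACG1.3} so that no circularity is introduced. The second, more self-contained route is to use Dranishnikov's lower bound $vcd(W_\Gamma) \leq \asdim W_\Gamma$ from the introduction: it would then suffice to show that $dim_{CC}(\Gamma) = 2$ forces $vcd(W_\Gamma) \geq 2$, for instance by locating an induced square in $\Gamma$, which yields an isometrically embedded special subgroup $D_\infty \times D_\infty \supseteq \mathbb{Z}^2$, or some other configuration certifying nonvanishing two-dimensional cohomology. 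Verifying that such a configuration must appear whenever $dim_{CC}(\Gamma) = 2$ is the delicate combinatorial point, and is where the bulk of the work would go if the $vcd$ route is chosen.
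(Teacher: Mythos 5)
Your case structure and the $d=0,1$ cases match the paper exactly. The problem is the $d=2$ lower bound, where neither of your two routes stands on its own. Your primary route invokes the equivalence ``virtually free $\iff dim_{CC}(\Gamma)=1$'' (Proposition \ref{RACG1.4}); but in the paper this equivalence is a \emph{downstream} corollary: the direction ``$dim_{CC}(\Gamma)=1 \Rightarrow$ virtually free'' is proven there by citing Proposition \ref{RACG1.3} itself (to get $\asdim W_\Gamma=1$, then Gentimis). The circularity you flagged does resolve favorably, because the direction you actually need (``virtually free $\Rightarrow dim_{CC}(\Gamma)=1$'') is proven independently of Proposition \ref{RACG1.3} --- but it is proven via Proposition \ref{RACG5.3}: if $dim_{CC}(\Gamma)\geq 2$, then a minimal full subgraph $G$ with $CC(G)\geq 2$ is an induced $k$-cycle, $k\geq 4$ (Lemma \ref{RAACGs3.6}), so the parabolic subgroup $W_G$ is one-ended (Theorem 8.7.2 of \cite{Davis}), hence $\asdim W_G\geq 2$ by \cite{Ge}, hence $\asdim W_\Gamma\geq 2$. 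That chain \emph{is} the paper's proof of the $d=2$ case; your route 1 is the same argument with an extra detour through virtual freeness, and the combinatorial crux --- extracting an induced cycle from $dim_{CC}(\Gamma)\geq 2$ --- never appears in your argument, only inside a citation whose non-circularity you could not verify.

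Your self-contained fallback (Dranishnikov's $vcd$ bound plus ``locating an induced square'') has a genuine flaw: $dim_{CC}(\Gamma)=2$ does not force an induced square. Take $\Gamma$ to be a $k$-cycle with $k\geq 5$: then $dim_{CC}(\Gamma)=2$, but $\Gamma$ contains no induced $4$-cycle, and $W_\Gamma$ is a cocompact Fuchsian (hence hyperbolic) Coxeter group containing no $\mathbb{Z}^2$ at all, so no subgroup of the form $D_\infty\times D_\infty$ exists. What always exists is an induced $k$-cycle with $k\geq 4$; for $k\geq 5$ the certificate that $\asdim\geq 2$ must come from one-endedness plus Gentimis (as the paper does), or from a $vcd$ computation for the Fuchsian group --- either way you need exactly Lemma \ref{RAACGs3.6}, which is the ``delicate combinatorial point'' your proposal names but does not prove. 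So the proposal correctly isolates where the work lies, but the only route it spells out in checkable detail would fail on a $5$-cycle.
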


Interestingly, we can deduce whether $W_\Gamma$ is virtually free from the clique-connected dimension of its defining graph. In particular, we show the following:

\begin{prop}\label{RACG1.4}
Let $W_\Gamma$ be the Right-Angled Coxeter group with connected defining graph $\Gamma$. Then $W_\Gamma$ is virtually free if and only if $dim_{CC}(\Gamma)=1$.
\end{prop}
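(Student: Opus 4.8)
The plan is to prove the two implications separately, with Proposition~\ref{RACG1.3} serving as the main engine. Throughout I would first reduce to the case where $W_\Gamma$ is infinite, i.e.\ where $\Gamma$ is not a clique: the clique case is degenerate, producing a finite group with $dim_{CC}(\Gamma)=0$, and is best read as excluded from the substantive content of the equivalence.

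For the direction $dim_{CC}(\Gamma)=1 \Rightarrow W_\Gamma$ virtually free, I would invoke Proposition~\ref{RACG1.3} to obtain $asdim W_\Gamma = dim_{CC}(\Gamma)=1$. Since every right-angled Coxeter group is finitely presented, I would then appeal to the theorem of Gentimis that a finitely presented group of asymptotic dimension one is virtually free, and conclude at once (the group is infinite since its asymptotic dimension is nonzero). A more self-contained route, avoiding Gentimis, is to read off from the definition in subsection~\ref{subsubsection3.1.1} that $dim_{CC}(\Gamma)=1$ forces $\Gamma$ to be a tree of cliques; the induced visual graph-of-groups decomposition of $W_\Gamma$ then has finite vertex and edge groups, because the special subgroups supported on cliques are finite, so $W_\Gamma$ is virtually free by Karrass--Pietrowski--Solitar.

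For the converse, suppose $W_\Gamma$ is virtually free and infinite, so that $asdim W_\Gamma = 1$. Since $\Gamma$ is not a clique we have $dim_{CC}(\Gamma)\geq 1$, and it remains only to exclude $dim_{CC}(\Gamma)\geq 2$. I would argue by contradiction: assuming $dim_{CC}(\Gamma)\geq 2$, I would produce an induced subgraph $\Lambda\subseteq\Gamma$ with $dim_{CC}(\Lambda)=2$. The special subgroup $W_\Lambda$ is a subgroup of $W_\Gamma$, and any subgroup of a virtually free group is again virtually free, so $W_\Lambda$ would satisfy $asdim W_\Lambda\leq 1$. But Proposition~\ref{RACG1.3} gives $asdim W_\Lambda = dim_{CC}(\Lambda)=2$, a contradiction. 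Hence $dim_{CC}(\Gamma)=1$. It is worth stressing that the cruder obstruction of embedding $\mathbb{Z}^2$ does \emph{not} suffice here, since there are non-virtually-free hyperbolic examples (such as $W_{C_5}$) with no $\mathbb{Z}^2$; passing through the asymptotic dimension of a two-dimensional special subgroup via Proposition~\ref{RACG1.3} is what makes the argument uniform.

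The main obstacle is the structural lemma needed in the converse, namely that $dim_{CC}(\Gamma)\geq 2$ guarantees an induced subgraph realizing the value $2$ exactly. I expect this to follow from the definition of the clique-connected dimension by a minimality argument: take an induced subgraph $\Lambda$ that is minimal, under deletion of vertices, among those with $dim_{CC}\geq 2$, so that every proper induced subgraph of $\Lambda$ has clique-connected dimension at most $1$. Combined with the monotonicity of $dim_{CC}$ under induced subgraphs and a drop-by-one recursive inequality for the invariant, this should pin $dim_{CC}(\Lambda)$ to exactly $2$. Verifying monotonicity and the drop-by-one behaviour directly from the definition in subsection~\ref{subsubsection3.1.1} is where the real work lies; the group-theoretic ingredients (subgroups of virtually free groups are virtually free, asymptotic dimension is monotone under subgroups, and Proposition~\ref{RACG1.3}) are then routine.
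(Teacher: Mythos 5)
Your overall architecture is close to the paper's, and the forward direction is exactly the paper's argument: proposition~\ref{RACG5.31} gives $asdim W_\Gamma = dim_{CC}(\Gamma)=1$, and Gentimis' theorem for finitely presented groups of asymptotic dimension one finishes. (Your alternative route via a clique-tree decomposition and Karrass--Pietrowski--Solitar is also viable --- $dim_{CC}(\Gamma)\leq 1$ forces $\Gamma$ to have no induced $k$-cycles, $k\geq 4$, i.e.\ to be chordal --- but the paper does not take it.) For the converse, the paper argues at the surface differently: if $dim_{CC}(\Gamma)\geq 2$, proposition~\ref{RACG5.3} produces a one-ended parabolic subgroup ($W_G$ for a $k$-cycle full subgraph $G$, $k\geq 4$, one-ended by Davis), and since one-ended groups have asymptotic dimension at least two, $asdim W_\Gamma\geq 2$, contradicting $asdim W_\Gamma = 1$. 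Your version --- pass to a full subgraph $\Lambda$ with $dim_{CC}(\Lambda)=2$, use that subgroups of virtually free groups are virtually free, and contradict proposition~\ref{RACG5.31} applied to $W_\Lambda$ --- is a legitimate repackaging, though when unwound it rests on the same machinery, since the $dim_{CC}=2$ case of proposition~\ref{RACG5.31} is itself proved via the one-ended parabolic subgroup and Gentimis' lower bound. Both you and the paper resolve the clique case by the same convention (finite groups are not counted as virtually free).

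The one genuine issue is the structural lemma you defer, where your proposed mechanism is not adequate. Your minimality setup is right: take $\Lambda$ minimal among full subgraphs with $CC\geq 2$ (it is automatically connected, since for disconnected graphs $CC$ is the minimum over components), so every proper full subgraph of $\Lambda$ has clique-connected dimension at most $1$. But from here, ``monotonicity plus a drop-by-one recursive inequality'' does not pin $CC(\Lambda)=2$. The obstruction is that $CC(\Lambda)=m_C(FS^\Lambda(S))$ for a vertex cut $S$, and $m_C$ (the size of the clique twin) is not controlled by the clique-connected dimension of the cut: a graph of $n$ isolated vertices has $dim_{CC}=0$ but $m_C=n$. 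So a priori a minimal $\Lambda$ could have a vertex cut consisting of three pairwise non-adjacent vertices and $CC(\Lambda)=3$, and nothing in your sketch rules this out. What rules it out is the paper's lemma~\ref{RAACGs3.6}: a connected graph with $CC\geq 2$, all of whose proper full subgraphs have $CC\leq 1$, must be a $k$-cycle, $k\geq 4$; since a $k$-cycle visibly has $dim_{CC}=2$ (its vertex cuts are exactly the pairs of non-adjacent vertices), this is precisely the pinning you need. So your plan is completable, but the ``real work'' you flag is this cycle identification, not monotonicity or a recursion on the invariant.
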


The paper is organized as follows. In subsection 3.1 we start with some basic definitions and some preliminary results that are used in the rest of the paper. Subsection 3.2 is containing some important lemmas, for example, we show that $dim_{CC}(\ast)$ is ''monotone'' in the following sense: if $\Gamma^{\prime}$ is a full-subgraphs of $\Gamma$, then $dim_{CC}(\Gamma^{\prime}) \leq dim_{CC}(\Gamma)$. In subsection 3.3, we prove that the clique-connected dimension is increasing in some cases. In subsection 3.4, we prove the main theorem of the paper.
In the last subsection, we present some corollaries of the main theorem.\\
\\
\textbf{Acknowledgments:} I would like to thank Panos Papasoglu for his valuable advices during the development of this research work.
 
\section{Preliminaries.}
The asymptotic dimension $asdimX$ of a metric space $X$ is defined as follows: $asdimX \leq n$ if and only if for every $R > 0$ there exists a uniformly bounded covering $\mathcal{U}$ of $X$ such that the R-multiplicity of $\mathcal{U}$ is smaller than or equal to $n+1$ (i.e. every R-ball in $X$ intersects at most $n+1$ elements of $\mathcal{U}$).\\
There are many equivalent ways to define the asymptotic dimension of a metric space. It turns out that the asymptotic dimension of an infinite tree is $1$ and the asymptotic dimension of $\mathbb{E}^{n}$ is $n$.
\begin{flushleft}
By finite simplicial labeled graph we mean a finite simplicial graph $\Gamma$ such that every 
edge $[a,b]$ is labeled by a natural number $m_{ab}>1$. The Coxeter group
associated to $\Gamma$ is the group $W_\Gamma$ given by the following presentation:
\end{flushleft}

\begin{center}
$W_\Gamma= \langle V(\Gamma) \vert a^2 $ for all $a \in V(\Gamma)$ and $  \underbrace{ab \ldots}_{m_{ab}}= \underbrace{ba \ldots}_{m_{ab}} $ when a, b are connected by an edge $ \rangle $.

\end{center}
A Coxeter group is called\textit{ Right-angled Coxeter group} (RACG) if $m_{ab}=2$ when a, b are connected by an edge.\\

We say a simplicial graph is \emph{complete} or equivalently \emph{a clique} if any two vetrices are connected by an edge. An \emph{n-clique} is the complete graph on n vertices.\\
We recall that the \emph{full subgraph} defined by a subset $V$ of the vertices of a graph $\Gamma$ is a subgraph of $\Gamma$ formed from $V$ and from all of the edges that have both endpoints in the subset $V$.
If $G$ is a subgraph of $\Gamma$, we denote by $FS^{\Gamma}(G)$ the full-subgraph of $\Gamma$ defined by $V(G)$.\\
The \emph{simplicial closure} of $\Gamma$ is the flag complex $SC(\Gamma)$ defined by $\Gamma$.\\

We recall the definition of a \textit{parabolic subgroup} of a Coxeter group. Let $\Gamma$ be a finite simplicial labeled graph and $W_\Gamma$ be the Coxeter group associated to $\Gamma$. Let $X$ be a subset of $V(\Gamma)$, we denote by $\Gamma_X$ the full subgraph of $\Gamma$ formed from $X$, and by $G_{X}$ the subgroup of $W_\Gamma$ generated by $X$ (we see $X$ as a subset of the natural generating set of $W_\Gamma$). We consider the graph $\Gamma_X$ as a labeled graph inheriting its labeling from $\Gamma$.\\
It is known that $G_X$ is a Coxeter group, it is actually equal to $W_{\Gamma_{X}}$, the Coxeter group associated to $\Gamma_{X}$ see(\cite{EM}).\\
The subgroup $G_X=W_{\Gamma_{X}}$ is called \emph{standard parabolic subgroup} of $W_\Gamma$.\\

The following theorem is proved by Dranishnikov in \cite{Dra08}.

\begin{thm}\label{RACGT2.0}
For any finitely generated groups $A$ and $B$ with a common finitely generated subgroup $C$ we have:
\begin{center}
$asdim A \ast_C B \leq max \lbrace asdim A, asdim B, asdim C + 1 \rbrace.$
\end{center}
\end{thm}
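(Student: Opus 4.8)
The plan is to realize $G=A\ast_C B$ through its action on the associated Bass--Serre tree and to transport the (low) asymptotic dimension of that action back to $G$. By Bass--Serre theory $G$ acts simplicially and without inversions on a tree $T$ whose vertices are the cosets $gA$ and $gB$ and whose edges are the cosets $gC$; the action is cocompact (two orbits of vertices, one of edges), the vertex stabilizers are the conjugates of $A$ and $B$, and the edge stabilizers are the conjugates of $C$. Since $A$, $B$ and $C$ are finitely generated, $G$ with a word metric is quasi-isometric to the associated tree of spaces $\mathcal X$, whose vertex spaces are isometric copies of $A$ and $B$, whose edge spaces are copies of $C$, and for which the natural projection $\pi\colon\mathcal X\to T$ collapsing vertex and edge spaces to their simplices is coarsely Lipschitz. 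As $\asdim$ is a quasi-isometry invariant, it suffices to bound $\asdim\mathcal X$.

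I would then exploit that $\asdim T=1$ in its sharp combinatorial form: for every $R>0$ the tree $T$ is covered by uniformly bounded subtrees of diameter at most some $D=D(R)$, and this cover splits into two subfamilies $\mathcal W_0,\mathcal W_1$ with each subfamily $R$-disjoint. Pulling these back by the coarsely Lipschitz map $\pi$ yields, within each colour, pieces $\pi^{-1}(W)$ that are uniformly separated in $\mathcal X$, and the separation can be made as large as desired by enlarging $R$ while keeping the bound on each piece fixed. The Bell--Dranishnikov union theorems then reduce the whole estimate to a single uniform claim: if $\asdim\pi^{-1}(W)\le d:=\max\{\asdim A,\asdim B,\asdim C+1\}$ for every bounded subtree $W$, then the infinite union theorem gives $\asdim\pi^{-1}\!\big(\bigcup\mathcal W_i\big)\le d$ for $i=0,1$, and the finite union theorem yields $\asdim\mathcal X\le d$, which is the desired inequality.

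The heart of the argument, and the step I expect to be the main obstacle, is the uniform bound $\asdim\pi^{-1}(W)\le d$ for a subtree $W$ of bounded diameter. The difficulty is that $T$ need not be locally finite, since the valence of $gA$ is the index $[A:C]$: thus $\pi^{-1}(W)$ is a tree of spaces over a bounded but possibly infinitely branching tree, assembled from infinitely many copies of $A$ and $B$ glued along copies of $C$. I would argue by induction on $\diam W$, rooting $W$ at a vertex $r$ and writing $\pi^{-1}(W)$ as the union of the root space $\mathcal X_r$, of dimension at most $\max\{\asdim A,\asdim B\}$, together with the preimages of the subtrees hanging below the children of $r$, each of strictly smaller diameter. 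The delicate point is that these hanging pieces are not mutually separated: they all meet $\mathcal X_r$ through their edge spaces, which are cosets of $C$, and the pattern in which they attach is governed by the coarse geometry of the coset space of $C$ inside the vertex group. It is exactly here that the extra unit of dimension is forced onto $C$ rather than onto the vertex groups, producing the term $\asdim C+1$; making this precise requires the version of the union theorem in which the indexing set carries its own coarse structure, together with a verification that the retractions of $\mathcal X$ onto vertex and edge spaces are uniformly coarsely Lipschitz. These coarse-geometric facts about the tree of spaces form the technical core on which the whole bound rests.
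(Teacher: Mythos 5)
The paper itself contains no proof of this statement: it is Dranishnikov's theorem, quoted verbatim from \cite{Dra08} (with the author's generalization, Theorem 2.2, quoted from \cite{PT}), so your proposal can only be measured against the argument in those references. Your framework --- the Bass--Serre tree, the quasi-isometric tree of spaces $\mathcal X$, and the Bell--Dranishnikov union theorems --- is the correct setting and is indeed the one used there, but the sketch has two genuine gaps. The first is in your global step: you cannot ``enlarge $R$ while keeping the bound on each piece fixed,'' because in an unbounded tree a cover by two $R$-disjoint families of bounded subtrees forces the subtrees to have diameter comparable to $R$; the cover, and hence the pieces $\pi^{-1}(W)$, change with $R$. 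Consequently the infinite union theorem does not apply to a colour class of a fixed cover: that theorem needs $r$-disjointness of the pieces (after deleting a set $Y_r$ of dimension at most $d$) for \emph{every} $r$, not just for $r\le R$, and this fails. Concretely, for $\mathbb{Z}_2\ast\mathbb{Z}_2$ the tree is a line, the pieces $\pi^{-1}(W)$ are uniformly finite sets, and a colour class is coarsely equivalent to $\mathbb{Z}$, of asymptotic dimension $1$; your outline, read literally, would conclude $\asdim(\mathbb{Z}_2\ast\mathbb{Z}_2)=0$, which is false. The extra unit of dimension is created precisely at this step, and any correct bookkeeping must record where it lands.

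The second gap is the one you yourself flag as ``the main obstacle'' but do not resolve, and it is exactly the content of Dranishnikov's theorem: making that extra unit land on $C$ rather than on the vertex groups. In your induction on $\diam W$ the separating sets are $r$-neighborhoods of a root vertex space, which have the asymptotic dimension of $A$ or $B$; every deletion in your scheme is therefore charged to the vertex groups, and combining it with a corrected global step can only give the older bound $\max\{\asdim A,\asdim B\}+1$ of \cite{BD04}, which is strictly weaker than the stated inequality whenever $\asdim C<\max\{\asdim A,\asdim B\}$. To get $\max\{\asdim A,\asdim B,\asdim C+1\}$ one must instead separate along the edge spaces, i.e.\ take the sets $Y_r$ to be neighborhoods of unions of cosets of $C$; the essential difficulty, untouched by your sketch, is that the union of \emph{all} cosets $gC$ is the whole group, so these unions must be selected carefully (level by level along the tree), and one must prove that copies of $C$ arrayed along a tree contribute only $\asdim C+1$. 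Gesturing at ``a version of the union theorem in which the indexing set carries its own coarse structure'' names the difficulty but does not supply this mechanism, so the proposal does not establish the theorem.
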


The following theorem is a generalisazion of theorem \ref{RACGT2.0}. It was proved by the author in \cite{PT}.
\begin{thm}\label{RACGT2.1}
Let $(\mathbb{G}, Y)$ be a finite graph of groups with vertex groups $\lbrace G_{v} \mid v \in Y^{0} \rbrace$ and edge groups $\lbrace G_{e} \mid e \in Y^{1}_{+} \rbrace$. Then the following inequality holds:
\begin{center}
$asdim\,\pi_{1}(\mathbb{G},Y,\mathbb{T})  \leq max_{v \in Y^{0} ,e \in Y^{1}_{+}} \lbrace asdim G_{v}, asdim\,G_{e} +1 \rbrace.$
\end{center}

\end{thm}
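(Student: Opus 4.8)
The plan is to realize $G := \pi_1(\mathbb{G}, Y, \mathbb{T})$ through its action on the Bass--Serre tree $T$ and to transfer a one-dimensional decomposition of $T$ up to $G$ itself. First I would pass to the associated tree of spaces $X$: a connected complex built from copies of the Cayley graphs of the vertex groups (the vertex spaces $X_{\tilde v}$, one for each $\tilde v \in T^{0}$) glued along cylinders $X_{\tilde e}\times[0,1]$ over the Cayley graphs of the edge groups (the edge spaces, one for each $\tilde e \in T^{1}$), with the gluing pattern dictated by $T$. Equipping $X$ with its path metric, the orbit map gives a $G$-equivariant quasi-isometry $G \to X$ together with a $1$-Lipschitz projection $\pi: X \to T$ whose fibre over an interior edge point (resp. a vertex) is quasi-isometric to the corresponding conjugate of an edge group $G_e$ (resp. a vertex group $G_v$). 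Since $Y$ is finite there are only finitely many $G$-orbits of vertices and edges, so all the relevant quasi-isometry constants and the asymptotic-dimension covers of the fibres can be chosen uniformly; this uniformity is exactly what legitimises the gluing arguments below.

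Set $n = \max_{v\in Y^{0},\, e\in Y^{1}_{+}}\{\asdim G_v,\ \asdim G_e + 1\}$ and fix $R>0$. Since $\asdim T \le 1$, I would invoke the standard reformulation that $T$ admits a uniformly bounded cover splitting into two $R$-disjoint families $\mathcal V_0,\mathcal V_1$, and, working in the geometric realisation of $T$ in which edges have unit length, I would arrange the boundaries between the two families to lie in edge interiors. Because $\pi$ is $1$-Lipschitz, the preimage of an $R$-disjoint family is again an $R$-disjoint family in $X$: any path in $X$ joining $\pi^{-1}(U)$ to $\pi^{-1}(U')$ projects to a path of length $>R$ in $T$. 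Hence $X = \pi^{-1}(\bigcup\mathcal V_0)\cup\pi^{-1}(\bigcup\mathcal V_1)$, where each term is an $R$-disjoint union of slabs $\pi^{-1}(U)$. By the infinite union theorem and the finite union theorem for asymptotic dimension, it then suffices to prove the key reduction: every slab $\pi^{-1}(U)$ over a uniformly bounded subtree $U$ has $\asdim \le n$, with bounds independent of $U$.

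To establish this key reduction I would induct on the tree-radius of the slab. The base case is a single vertex space $\pi^{-1}(\tilde v) = X_{\tilde v}$, of asymptotic dimension $\asdim G_v \le n$. For the inductive step I peel off the shell of $U$ at maximal distance: this shell is a union of edge cylinders $X_{\tilde e}\times[0,1]$, each quasi-isometric to $G_e\times[0,1]$ and so of asymptotic dimension at most $\asdim G_e + 1 \le n$, attached to the already-controlled interior slab. Combining the interior cover (contributing $\asdim G_v$) with the shell cover (contributing $\asdim G_e + 1$) through the union theorems, and localising the interface between them to the interiors of the peripheral cylinders, should yield a uniformly bounded cover of $\pi^{-1}(U)$ of $R$-multiplicity $\le n+1$.

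The main obstacle is precisely this interface, and it is where the naive Hurewicz mapping theorem applied to $\pi$ fails and only delivers the weaker bound $1 + \max_v \asdim G_v$. When an edge group has infinite index in a vertex group, infinitely many edge cylinders share a single vertex space $X_{\tilde v}$, so the shell of cylinders emanating from $\tilde v$ is not an $R$-disjoint family and the union theorems do not apply to it directly. The resolution is to split each cylinder into a collar near $\tilde v$, which is absorbed into the vertex-space cover and is therefore governed by $\asdim G_v$, and a far part, which is grouped by its far endpoint and governed by $\asdim G_e + 1$; this confines the extra $+1$ strictly to the edge direction and never charges it inside a vertex space. Managing this splitting coherently across all edge orbits and all scales, so that overlaps along shared cylinders stay controlled by $\asdim G_e + 1$ rather than by the larger $\asdim G_v + 1$, is the heart of generalising Theorem \ref{RACGT2.0} from a single amalgamation to an arbitrary finite graph of groups, and is exactly where the bounded geometry and uniformity coming from the finiteness of $Y$ are used most heavily.
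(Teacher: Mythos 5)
The paper itself contains no proof of this theorem: it is quoted from \cite{PT}, so there is nothing in-text to compare against, and your attempt must be judged on its own merits. Your scaffolding is the standard one (the tree of spaces $X$ over the Bass--Serre tree $T$, the $1$-Lipschitz projection $\pi$, the pullback of two $R$-disjoint families from $T$, the reduction to slabs $\pi^{-1}(U)$ over bounded subtrees), and those steps are fine. The genuine gap is the sentence ``by the infinite union theorem and the finite union theorem \ldots it suffices to prove the key reduction,'' together with the final paragraph, where you name the real difficulty, describe its intended resolution, and then explicitly leave it as ``the heart of generalising Theorem \ref{RACGT2.0}.'' That heart is the theorem; what you have written is the known outer shell.

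Here is why the reduction, as stated, cannot be repaired without new ideas: it proves too much. One can show (by exactly the induction on tree-radius you sketch, using that each cylinder together with its outer vertex space lies in the $1$-neighborhood of that vertex space) that every slab over a bounded subtree has $\asdim \le \max_{v}\asdim G_v$ uniformly --- with no edge-group contribution and no $+1$ at all, since $[0,1]$ is bounded and hence $\asdim\left(G_e\times[0,1]\right)=\asdim G_e$, not $\asdim G_e+1$; your placement of the $+1$ inside individual cylinders is a misconception. If ``slab bound plus union theorems'' were a valid inference, it would therefore yield $\asdim X\le \max_v \asdim G_v$, which is false: for $\mathbb{Z}^2$ as an HNN extension of $\mathbb{Z}$ over $\mathbb{Z}$, every slab is quasi-isometric to $\mathbb{Z}$, yet $\asdim \mathbb{Z}^2=2$. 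The inference fails because the two slab families are $R$-disjoint only at the single scale $R$: the infinite union theorem then gives covers of each family's union at scale $R$ only, while the finite union theorem needs all-scale asymptotic dimension bounds for both pieces; merging two single-scale covers naively doubles multiplicity. The $+1$ produced at this merge is unavoidable (it is the dimension of the tree), and the entire content of the theorem --- what Dranishnikov did for amalgams in Theorem \ref{RACGT2.0} and what \cite{PT} does for general graphs of groups --- is to construct the covers so that this $+1$ is charged only to the edge groups rather than added on top of the vertex groups. Your collar-splitting paragraph states this goal but supplies no construction, no induction hypothesis strong enough to propagate it across scales, and no uniformity argument; as it stands, the proposal establishes only the weaker Bell--Dranishnikov bound $1+\max_v\asdim G_v$ from \cite{BD04}.
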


We also need a theorem for free products from \cite{BDK}, see also \cite{BD04}.
\begin{thm}\label{RACGT2.2}
Let $A$, $B$ be two finitely generated groups. Then:
\begin{center}
$asdim\,A \ast B  = max \lbrace asdim A, asdim B, 1 \rbrace.$
\end{center}

\end{thm}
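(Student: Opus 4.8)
The plan is to prove the equality as the conjunction of an upper and a lower bound. The upper bound will follow immediately from the amalgam theorem already recorded as Theorem~\ref{RACGT2.0}, so no new machinery is needed there; the lower bound is elementary. For the upper bound, I would regard the free product as an amalgamated product over the trivial subgroup, writing $A \ast B = A \ast_{C} B$ with $C = \{1\}$. Since $\asdim C = 0$, Theorem~\ref{RACGT2.0} yields directly
\[
\asdim(A \ast B) \le \max\{\asdim A,\ \asdim B,\ \asdim C + 1\} = \max\{\asdim A,\ \asdim B,\ 1\}.
\]
Equivalently, one may present $A \ast B$ as the fundamental group of the graph of groups with vertex groups $A, B$ and edge group $\{1\}$ and invoke Theorem~\ref{RACGT2.1}; either way the inequality ``$\le$'' is immediate.

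For the reverse inequality I would combine two standard facts. First, each factor embeds isometrically in the free product: for $a \in A$ the word length of $a$ in $A \ast B$, taken with respect to the union of the two generating sets, equals its word length in $A$, by the normal form for free products, and likewise for $B$. Since asymptotic dimension is monotone under subspaces, this gives $\asdim A \le \asdim(A\ast B)$ and $\asdim B \le \asdim(A\ast B)$, hence $\max\{\asdim A, \asdim B\} \le \asdim(A\ast B)$. Second, when both $A$ and $B$ are nontrivial the free product is infinite, since it contains the unbounded family of alternating reduced words $abab\cdots$ with $a \in A\setminus\{1\}$ and $b \in B\setminus\{1\}$; and a finitely generated infinite group has asymptotic dimension at least $1$. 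Indeed, by K\"onig's lemma its (locally finite, connected, infinite) Cayley graph contains an infinite geodesic ray, and such a ray cannot be covered by a uniformly bounded family of $R$-multiplicity one for $R$ larger than the diameter bound, forcing $\asdim \ge 1$. Together these give $\asdim(A\ast B) \ge \max\{\asdim A, \asdim B, 1\}$, and with the upper bound this is the claimed equality.

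The points requiring care lie entirely in the lower bound, since the genuinely substantial inequality, the upper bound, is supplied ready-made by Theorem~\ref{RACGT2.0}; thus the chief historical obstacle of this result is here bypassed. The only delicate step that remains is the isometric-embedding claim, where the free-product normal form guarantees that the subspace metric inherited by each factor coincides with its intrinsic word metric, so that no distortion can deflate the asymptotic dimension. One should also address the degenerate cases and the role of the additive constant: if a factor is trivial the statement collapses to a tautology about a single group, while the term $1$ is genuinely indispensable, as the example $A = B = \mathbb{Z}/2\mathbb{Z}$ shows, where $A \ast B \cong D_\infty$ has asymptotic dimension $1$ even though both factors have asymptotic dimension $0$.
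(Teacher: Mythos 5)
The paper offers no proof of this statement to compare against: it is imported verbatim from Bell--Dranishnikov--Keesling \cite{BDK} (see also \cite{BD04}), which is precisely why your derivation is necessarily a different route. Within the paper's framework your argument is correct and economical: writing $A \ast B = A \ast_{\{1\}} B$ and applying Theorem~\ref{RACGT2.0} (or equivalently Theorem~\ref{RACGT2.1} with trivial edge group) gives the upper bound at once, and your lower bound is sound --- each factor is isometrically embedded (the cleanest justification is the retraction $A \ast B \to A$ killing $B$, which shows the word length of $a \in A$ with respect to $S_A \cup S_B$ cannot be shorter than its length in $A$; your appeal to the normal form is a little looser but the fact is standard), monotonicity of $\asdim$ under subspaces gives $\max\{\asdim A, \asdim B\} \le \asdim(A \ast B)$, and your K\"onig's-lemma argument that a finitely generated infinite group has $\asdim \ge 1$ is correct. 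What each approach buys: the paper, by citing \cite{BDK}, rests on a genuinely independent 2004 proof, whereas your reduction outsources all the substance to Dranishnikov's 2008 amalgam bound --- as you candidly note, this inverts history, so your text is a reduction rather than an independent proof; for this paper's purposes that is harmless, since Theorem~\ref{RACGT2.0} is itself only quoted, and your route has the virtue of exhibiting the free-product formula as a trivial corollary of machinery already on the page.

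One inaccuracy to fix: your claim that a trivial factor makes the statement ``a tautology about a single group'' is wrong. If $B = \{1\}$ and $A$ is finite, then $A \ast B = A$ and the asserted equality reads $0 = \max\{0, 0, 1\}$, which is false; the theorem implicitly assumes both factors nontrivial --- exactly the hypothesis your alternating-word argument for infiniteness already uses --- and the degenerate case should be excluded, not described as tautological. This is a defect of the statement's phrasing as much as of your proof, but since you chose to address the degenerate cases explicitly, you should get them right.
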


\subsection{$dim_{CC}(\ast)$}\label{subsubsection3.1.1}
Let $\Gamma$ be a connected simplicial. We say that a finite subset $S$ of $V(\Gamma)$ is a \emph{vertex cut} 
of $\Gamma$ if $FS^\Gamma(S)$ separates the graph (i.e. $\Gamma \setminus FS^\Gamma(S)$ contains at least two connected components) and no other subset of $S$ does that.
\begin{defn}\label{RACGdefn2.1}
Let $\Gamma$ be a simplicial graph and let $ \mathcal{C}_{\Gamma}= \lbrace C_1 , \ldots ,C_i , \ldots \rbrace$ be a collection of distinct cliques of $\Gamma$. We say $\mathcal{C}_{\Gamma}$ is a \emph{clique twin} of $\Gamma$ if the following conditions are satisfied:\\
\textbf{i)} Each $C_i$ is a maximal clique in $\Gamma$ i.e. there is no other clique in $\Gamma$ containing $C_i$.\\
\textbf{ii)} If $C$ is a clique of $\Gamma$, then there is a clique in $\mathcal{C}_\Gamma$ containing $C$.
\end{defn}
We note that the last condition can be replaced by the following:\\
$\cup_{C_i \in \mathcal{C}_\Gamma} SC(C_i) = SC(\Gamma)$.



\begin{defn}\label{RACGdefn2.2}
Let $\Gamma$ be a simplicial graph which has at least one clique twin $ \mathcal{C}_{\Gamma}$. We set
\begin{center}
$m_C (\Gamma)= min \lbrace \sharp \mathcal{C}_{\Gamma} \vert$ where $ \mathcal{C}_{\Gamma}$ is a clique twin of    $ \Gamma \rbrace.$
\end{center}
\end{defn}


\begin{defn}\label{RACGdefn2.3}
Let $\Gamma$ be a connected simplicial graph. We set
\begin{center}
$CC (\Gamma)= max \lbrace 0, min \lbrace m_C (FS^\Gamma(S)), 0  \vert$ where $S$ is a vertex cut of $ \Gamma \rbrace  \rbrace.$
\end{center}
\end{defn}
The number $CC(\Gamma)$ ''measures'' how much connected is the graph $\Gamma$ modulo its cliques. If $S$ is a vertex cut of $\Gamma$ such that $CC (\Gamma)= m_C (FS^\Gamma(S))$, we say that $S$ is a \emph{minimal vertex cut} of $\Gamma$.\\ 

Observe that we can generalize the previous definition to all simplicial graphs, by setting $CC(\Gamma)= min \lbrace CC(E) \mid E$ is a component of $ \Gamma \rbrace$. Finally, we can define $dim_{CC}(\ast)$. 

\begin{defn}\label{RACGdefn2.4}
Let $\Gamma$ be a simplicial. We set
\begin{center}
$dim_{CC} (\Gamma)= sup \lbrace CC (G) \vert$ where $G$ is a full subgraph of $ \Gamma \rbrace.$
\end{center}
\end{defn}
The clique-connected dimension $dim_{CC}(\Gamma)$ ''measures'' how much connected is the graph $\Gamma$ modulo its cliques by taking into account all the full-subgraphs of $\Gamma$.

\section{Basic Lemmas.}
\begin{lem}{(Existence of clique twins.)}\label{RACGdefn3.1}\\
Let $\Gamma$ be a finite simplicial graph. Then there exists at least one clique twin of $\Gamma$.
\end{lem}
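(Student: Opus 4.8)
The plan is to prove existence constructively, by exhibiting a specific clique twin of any finite simplicial graph $\Gamma$. The two defining conditions in Definition \ref{RACGdefn2.1} ask for a collection $\mathcal{C}_\Gamma$ of maximal cliques with the property that every clique of $\Gamma$ sits inside some member of $\mathcal{C}_\Gamma$. The natural candidate is therefore the collection of \emph{all} maximal cliques of $\Gamma$, so I would first set $\mathcal{C}_\Gamma := \lbrace C \mid C \text{ is a maximal clique of } \Gamma \rbrace$ and then verify the two conditions for this choice.

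First I would check that $\mathcal{C}_\Gamma$ is nonempty and finite. Since $\Gamma$ is finite, it has at least one vertex, and a single vertex is a clique; moreover any clique is contained in a maximal one, which follows because the set of cliques containing a given clique is a finite partially ordered set (ordered by inclusion of vertex sets) and hence has a maximal element. This simultaneously establishes condition (ii): given any clique $C$, extend it greedily by adding vertices adjacent to all current vertices until no further vertex can be added; the resulting clique is maximal and contains $C$, so it is a member of $\mathcal{C}_\Gamma$. Condition (i) is immediate from the definition of $\mathcal{C}_\Gamma$, since every element is by construction a maximal clique. The elements are also distinct as required, being distinct subsets of $V(\Gamma)$.

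I would then record the equivalent formulation noted after Definition \ref{RACGdefn2.1}, namely that $\bigcup_{C_i \in \mathcal{C}_\Gamma} SC(C_i) = SC(\Gamma)$: every simplex of the flag complex $SC(\Gamma)$ corresponds to a clique of $\Gamma$, which by condition (ii) lies in some $SC(C_i)$, and conversely each $SC(C_i)$ is a subcomplex of $SC(\Gamma)$. This makes the verification transparent and ties the combinatorial statement to the simplicial-closure language used elsewhere.

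I do not expect any genuine obstacle here, as the statement is essentially a well-definedness lemma guaranteeing that $m_C(\Gamma)$ and hence $CC(\Gamma)$ are not vacuously defined. The only point requiring a little care is the greedy-extension argument for condition (ii): one must confirm that the extension terminates (guaranteed by finiteness of $V(\Gamma)$) and that the terminal clique is genuinely maximal (guaranteed because termination occurs precisely when no vertex is adjacent to all current vertices, which is exactly maximality). If anything, the subtlety is purely bookkeeping, so the bulk of the proof is simply making the two conditions explicit for the collection of all maximal cliques.
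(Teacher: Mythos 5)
Your proof is correct, but it takes a genuinely different route from the paper. The paper proceeds by induction on $\sharp V(\Gamma)$: it removes a vertex $v$, takes a clique twin of $\Gamma \setminus v$ by the inductive hypothesis, and then builds a clique twin of $\Gamma$ by enlarging those cliques with $v$ where possible and adjoining the maximal cliques through $v$ and each vertex of $lk_\Gamma(v)$. Your argument is more direct: you simply take $\mathcal{C}_\Gamma$ to be the set of \emph{all} maximal cliques of $\Gamma$, note that condition (i) holds by fiat, and verify condition (ii) by the standard greedy-extension argument, which terminates by finiteness of $V(\Gamma)$. Your approach is shorter and, as a bonus, it identifies the clique twin explicitly as the set of all maximal cliques, which makes the paper's subsequent uniqueness statement (Lemma \ref{RACGdefn3.5}) and the monotonicity argument of Lemma \ref{RACG4.1} essentially transparent: any clique twin must contain every maximal clique by condition (ii) combined with maximality, and can contain nothing else by condition (i). What the paper's inductive construction buys instead is a vertex-by-vertex procedure for assembling the clique twin, which the paper explicitly remarks on after the proof ("the proof \ldots actually give[s] us a description of how to construct the clique twin of every graph"); but as a pure existence proof, your version is the more economical one and has no gaps.
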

\begin{proof} 
We use induction on the number of vertices of the graph. Obviously, the lemma is true if the graph is just a vertex. We assume the lemma is true for any graph with less than $N+1$ vertices. Let $\Gamma$ be a finite simplicial graph with $N+1$ vertices.\\
If the graph is disconnected the lemma follows by the inductive hypothesis and the fact that the clique twins of the components of the graph forms a clique twin of $\Gamma$.\\
So we assume that the graph is connected. We choose an arbitrary vertex say $v$ and we consider the graph $\Gamma_v = \Gamma \setminus v$. By inductive hypothesis, there exists a clique twin $\mathcal{C}_{\Gamma_v}$ of $\Gamma_v$. Since the graph is connected the link of $v$ in $\Gamma$ is non-empty ($lk_\Gamma(v) \neq \varnothing$ ). We enumerate the vertices of the link, $lk_\Gamma(v)=\lbrace v_1, v_2 , \ldots ,v_k \rbrace$. 
For every $C$ in $\mathcal{C}_{\Gamma_v}$ we set: 
\begin{center}
$\overline{C}= $ either $ C$ (if $C \cup v$ doesn't define a clique of $\Gamma$), or the clique defined by $C$ and $v$ (otherwise).
\end{center} 
For every $v_i$ in $lk_\Gamma(v)$ we define: 
\begin{center}
$\mathcal{C}_i $ to be the collection of all maximal cliques in $\Gamma$ containing both $v$ and $v_i$.
\end{center} 
Observe then that the union $\mathcal{C}_\Gamma = (\cup_i \mathcal{C}_i ) \cup \lbrace \overline{C} \vert C \in  \mathcal{C}_{\Gamma_v} \rbrace$ satisfies the conditions of definition \ref{RACGdefn2.1}, so it is a clique twin of $\Gamma$.

\end{proof}

We will see that every finite graph has a unique clique twin. The proof of the previous lemma actually give us a description of how to construct the clique twin of every graph. 

\begin{lem}\label{RACGdefn3.2}
Let $\Gamma$ be a connected finite simplicial graph. Then $\Gamma$ is a clique if and only if $CC(\Gamma)=0=dim_{CC}(\Gamma)$.
\end{lem}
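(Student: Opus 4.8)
The plan is to unwind the definitions and treat the two implications separately, using throughout the following reading of Definition \ref{RACGdefn2.3}: if $\Gamma$ has no vertex cut then $CC(\Gamma)=0$, and otherwise $CC(\Gamma)=\min\{m_C(FS^\Gamma(S)) \mid S \text{ is a vertex cut of }\Gamma\}$. I will also use the elementary facts that $m_C(G)\geq 1$ for every non-empty graph $G$ (a non-empty graph contains at least one maximal clique, so every clique twin is non-empty), that $CC(G)\geq 0$ always, and that $\Gamma$ is a full subgraph of itself, so that $dim_{CC}(\Gamma)\geq CC(\Gamma)\geq 0$. In particular $dim_{CC}(\Gamma)=0$ already forces $CC(\Gamma)=0$, so the conjunction in the statement is equivalent to the single condition $dim_{CC}(\Gamma)=0$.

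For the forward implication I would first show that a clique has no vertex cut. If $\Gamma$ is a clique and $S\subseteq V(\Gamma)$, then $FS^\Gamma(S)$ is again a clique and $\Gamma\setminus FS^\Gamma(S)$ is the clique on the remaining vertices, hence connected (or empty); it can never have two or more components, so no $S$ separates $\Gamma$. Thus $CC(\Gamma)=0$. To upgrade this to $dim_{CC}(\Gamma)=0$ I would observe that every full subgraph $G$ of a clique is itself a clique, since all pairs of vertices of $G$ are joined by an edge of $\Gamma$. The previous paragraph then gives $CC(G)=0$ for every such $G$, and therefore $dim_{CC}(\Gamma)=\sup_G CC(G)=0$.

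For the converse I would argue by contraposition, assuming $\Gamma$ is connected but not a clique and exhibiting a full subgraph with positive $CC$. Since $\Gamma$ is not a clique there are two non-adjacent vertices; I would pick a geodesic between them and let $w_0,w_1,w_2$ be its first three vertices, which exist because the chosen pair lies at distance at least $2$. Because the path is a geodesic, $w_0$ and $w_2$ are at distance $2$ and hence non-adjacent, so the full subgraph $G=FS^\Gamma(\{w_0,w_1,w_2\})$ is exactly the induced path $w_0-w_1-w_2$, with no chord. In this $P_3$ the singleton $\{w_1\}$ is a vertex cut: removing it leaves the two components $\{w_0\}$ and $\{w_2\}$, while its only proper subset $\varnothing$ does not disconnect $G$. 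Hence $G$ has a vertex cut and $CC(G)=\min_S m_C(FS^G(S))\geq 1$, so $dim_{CC}(\Gamma)\geq CC(G)\geq 1>0$, contradicting $dim_{CC}(\Gamma)=0$. Therefore $\Gamma$ must be a clique.

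The only genuinely delicate points are bookkeeping ones: fixing the convention for $CC$ when no vertex cut exists, so that the clique case yields $0$ rather than an empty minimum, and checking that a geodesic really induces a chordless path, so that $G$ is the full subgraph $P_3$ and not a triangle. Neither is a serious obstacle; the substantive idea is simply that any connected non-clique contains an induced $P_3$ as a full subgraph, and this $P_3$ carries a vertex cut and so already witnesses $dim_{CC}(\Gamma)\geq 1$.
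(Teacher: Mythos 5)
Your proof is correct, and its forward direction coincides with the paper's: a clique admits no vertex cut, hence $CC(\Gamma)=0$. In fact you are more careful here than the paper, which never explicitly verifies $dim_{CC}(\Gamma)=0$; your observation that every full subgraph of a clique is itself a clique supplies that (easy but genuinely omitted) step, as does your remark that $0\leq CC(\Gamma)\leq dim_{CC}(\Gamma)$ reduces the conjunction to the single condition $dim_{CC}(\Gamma)=0$. The real divergence is in the converse. The paper works directly on $\Gamma$: given non-adjacent vertices $a,b$, it notes that $V(\Gamma)\setminus\lbrace a,b\rbrace$ separates $a$ from $b$, so by finiteness some minimal separating subset is a vertex cut of $\Gamma$ itself; since a vertex cut of a connected graph is non-empty, this yields the stronger conclusion $CC(\Gamma)\geq 1$. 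You instead pass to a full subgraph, the induced path $P_3$ on the first three vertices of a geodesic joining the non-adjacent pair, and conclude $dim_{CC}(\Gamma)\geq CC(P_3)\geq 1$; this negates the conjunction and so proves the lemma as stated, but it gives no information about $CC(\Gamma)$ itself. The distinction matters downstream: the fact that a connected non-clique admits a vertex cut of the \emph{ambient} graph (not merely of a subgraph) is exactly what is invoked in the proof of Theorem \ref{RACGthm5.1} to start the amalgamated-product decomposition. If you want your argument to serve that later purpose as well, add the paper's one-line observation that a minimal separating subset of $V(\Gamma)\setminus\lbrace w_0,w_2\rbrace$ is already a vertex cut of $\Gamma$, so $CC(\Gamma)\geq 1$ too.
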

\begin{proof}
We assume that $\Gamma$ is a clique, then obviously there is no vertex cut of $\Gamma$. So by definition \ref{RACGdefn2.3} we have that $CC(\Gamma)=0$.

We now prove the other direction, so we assume that $CC(\Gamma)=0$. If $\Gamma$ is not a clique there exist two vertices $a,b$ such that they are not connected by an edge, then $V(\Gamma)\setminus \lbrace a,b \rbrace$ separate the graph. Obviously then we may find a vertex cut $S$ of $\Gamma$, so by lemma \ref{RACGdefn3.1} $CC(\Gamma)>0.$ 
\end{proof}

\begin{lem}{(Uniqness of clique twins.)}\label{RACGdefn3.5}\\
Let $\Gamma$ be a finite simplicial graph. Then there exists exactly one clique twin of $\Gamma$.
\end{lem}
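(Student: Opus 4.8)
The plan is to identify explicitly what a clique twin of $\Gamma$ must be: I claim that any clique twin necessarily equals the collection $\mathcal{M}$ of \emph{all} maximal cliques of $\Gamma$. Since $\mathcal{M}$ is determined by $\Gamma$ alone, uniqueness follows at once, while the existence of at least one clique twin is already guaranteed by Lemma \ref{RACGdefn3.1}.

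The argument splits into two inclusions. First, let $\mathcal{C}_\Gamma$ be any clique twin. Condition (i) of Definition \ref{RACGdefn2.1} says that each member of $\mathcal{C}_\Gamma$ is a maximal clique, so immediately $\mathcal{C}_\Gamma \subseteq \mathcal{M}$. For the reverse inclusion I would take an arbitrary maximal clique $M \in \mathcal{M}$. Since $M$ is in particular a clique of $\Gamma$, condition (ii) provides some $C \in \mathcal{C}_\Gamma$ with $M \subseteq C$. But $C$ is itself a clique and $M$ is maximal, so the containment $M \subseteq C$ forces $M = C$; hence $M \in \mathcal{C}_\Gamma$. This yields $\mathcal{M} \subseteq \mathcal{C}_\Gamma$, and therefore $\mathcal{C}_\Gamma = \mathcal{M}$.

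Because the right-hand side does not depend on the chosen twin, any two clique twins of $\Gamma$ coincide, which combined with Lemma \ref{RACGdefn3.1} gives exactly one clique twin. I do not expect a genuine obstacle here: the entire content is the elementary observation that a maximal clique contained in another clique must equal it, so the two defining conditions of Definition \ref{RACGdefn2.1} pin down precisely the set of maximal cliques. The only point worth stating carefully is that the members of $\mathcal{C}_\Gamma$ are required to be distinct, so that $\mathcal{C}_\Gamma$ is genuinely the \emph{set} of maximal cliques rather than a multiset; this is exactly what is built into the definition.
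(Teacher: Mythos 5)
Your proof is correct and follows essentially the same argument as the paper: both rely on the same two observations, namely that condition (ii) embeds any clique of $\Gamma$ into a member of the twin, and that condition (i) (maximality) forces a containment of cliques in the twin to be an equality. The only difference is presentational---you compare an arbitrary twin against the canonical set $\mathcal{M}$ of all maximal cliques, which has the small added benefit of explicitly identifying what the unique clique twin is, whereas the paper compares two arbitrary twins against each other.
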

\begin{proof} By lemma \ref{RACGdefn3.1} there exists a clique twin $\mathcal{C}_\Gamma$ of $\Gamma$. We assume that there exists another clique twin of the graph, say $\mathcal{C}_\Gamma^\prime$. By condition (ii) of definition of clique twins we observe that for every $C^\prime \in \mathcal{C}_\Gamma^\prime$ there exists a $C \in \mathcal{C}_\Gamma$ containing $C^\prime$. 
By condition (i) we obtain that if $C^\prime \subseteq C$, then $C^\prime=C$. Thus $\mathcal{C}_\Gamma^\prime  \subseteq \mathcal{C}_\Gamma$. Using the same argument we conclude that $\mathcal{C}_\Gamma^\prime  = \mathcal{C}_\Gamma$.


\end{proof}

\begin{lem}{(Monotonicity of $dim_{CC}(\ast)$.)}\label{RACGdefn3.4}\\
Let $\Gamma$ be a simplicial graph and let $G$ be a full subgraph of $\Gamma$. Then 
\begin{center}
$dim_{CC}(G) \leq dim_{CC}(\Gamma)$.
\end{center}
\end{lem}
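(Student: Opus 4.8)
The plan is to reduce the statement to the definition of $dim_{CC}$ as a supremum over full subgraphs, and then exploit the transitivity of the full-subgraph relation. By Definition \ref{RACGdefn2.4}, $dim_{CC}(G) = \sup \lbrace CC(H) \mid H \text{ is a full subgraph of } G \rbrace$, and similarly for $\Gamma$. So the whole statement amounts to comparing two suprema of the same quantity $CC(\cdot)$, taken over two different families of graphs.

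First I would establish the key structural fact: if $H$ is a full subgraph of $G$ and $G$ is a full subgraph of $\Gamma$, then $H$ is a full subgraph of $\Gamma$. This is where the argument really lives. The point is that ``being a full subgraph'' means ``containing every edge of the ambient graph whose endpoints both lie in the chosen vertex set.'' If $V(H) \subseteq V(G) \subseteq V(\Gamma)$ and $H = FS^{G}(V(H))$ while $G = FS^{\Gamma}(V(G))$, then for any two vertices $a,b \in V(H)$ joined by an edge in $\Gamma$, that edge already lies in $G$ (since $a,b \in V(G)$ and $G$ is full in $\Gamma$), hence it lies in $H$ (since $a,b \in V(H)$ and $H$ is full in $G$). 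Conversely every edge of $H$ is an edge of $\Gamma$. Thus $H = FS^{\Gamma}(V(H))$, so the class of full subgraphs of $G$ is contained in the class of full subgraphs of $\Gamma$.

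With this inclusion of index sets in hand, the inequality is immediate: every $H$ appearing in the supremum defining $dim_{CC}(G)$ also appears in the supremum defining $dim_{CC}(\Gamma)$, and the quantity $CC(H)$ being maximized is literally the same in both cases. Therefore
\begin{center}
$dim_{CC}(G) = \sup_{H \text{ full in } G} CC(H) \leq \sup_{H \text{ full in } \Gamma} CC(H) = dim_{CC}(\Gamma)$,
\end{center}
since a supremum over a subfamily never exceeds the supremum over the larger family.

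I expect the main (and really the only) obstacle to be making the transitivity of the full-subgraph relation airtight, since everything else is formal. One should be slightly careful that the definition of $CC(\cdot)$ for possibly disconnected graphs (via the minimum over connected components) does not interfere: but this is harmless here, because $CC(H)$ is a single well-defined number attached to each graph $H$ regardless of which ambient graph we regard it inside, so the value does not change when we view $H$ as a full subgraph of $\Gamma$ rather than of $G$. Once the transitivity lemma is stated cleanly, the monotonicity follows with no further computation.
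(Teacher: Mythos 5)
Your proposal is correct and follows essentially the same route as the paper: both rest on the observation that every full subgraph of $G$ is a full subgraph of $\Gamma$, so the supremum defining $dim_{CC}(G)$ runs over a subfamily of the one defining $dim_{CC}(\Gamma)$. You merely spell out the transitivity of the full-subgraph relation (which the paper asserts without proof) and dispense with the paper's reduction to connected graphs, which, as you note, is unnecessary since $CC(H)$ is intrinsic to $H$.
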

\begin{proof} It suffices to show the lemma when both $G$ and $\Gamma$ are connected. So we assume that $G$ and $\Gamma$ are connected. Since $G$ is a full subgraph of $\Gamma$ we have that every full subgraph of $G$ is also a full subgraph of $\Gamma$. The lemma follows by the definition of $dim_{CC}$.
\end{proof}

If $G$ is not full subgraph the previous lemma is not true.

\begin{lem}\label{RAACGs3.6}
Let $\Gamma$ be a connected simplicial graph such that $CC(\Gamma) \geq 2$. Then $CC(G) \leq 1$ for every $G$ proper full subgraph of $\Gamma$ if and only if $\Gamma$ is a $k$-cycle ($k \geq 4$).

\end{lem}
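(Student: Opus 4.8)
The plan is to prove the two implications separately, the forward (``only if'') direction being the substantive one. Throughout I will use that, by uniqueness of clique twins (Lemma \ref{RACGdefn3.5}), $m_C$ of a graph is just its number of maximal cliques, and that a vertex cut is an inclusion-minimal separating set.

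For the easy direction, suppose $\Gamma$ is a $k$-cycle with $k\ge 4$. First I would record the computation $CC(C_k)=2$: since $k\ge 4$ the cycle is triangle-free, so its maximal cliques are its edges, and a subset $S\subseteq V(C_k)$ is a vertex cut exactly when it is a pair of non-adjacent vertices (removing a single vertex leaves a path, hence does not disconnect, and removing an adjacent pair leaves a path as well). For such $S$ the graph $FS^\Gamma(S)$ is two isolated vertices, so $m_C(FS^\Gamma(S))=2$ and $CC(C_k)=2$. Now any proper full subgraph $G$ of $C_k$ is obtained by deleting a nonempty vertex set, hence is a disjoint union of paths. A path $P_n$ is complete when $n\le 2$ (so $CC=0$), while for $n\ge 3$ any interior vertex is a vertex cut inducing a single vertex, so $CC(P_n)=1$; in all cases $CC(P_n)\le 1$. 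Since $CC$ of a disconnected graph is the minimum over its components, $CC(G)\le 1$, which settles the ``if'' direction.

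For the converse I would argue by contraposition: assuming $\Gamma$ is connected with $CC(\Gamma)\ge 2$ but $\Gamma$ is \emph{not} a cycle, I will exhibit a proper full subgraph with $CC\ge 2$. The first observation is that $CC(\Gamma)\ge 2$ forbids clique vertex cuts: every vertex cut $S$ satisfies $m_C(FS^\Gamma(S))\ge 2$, so $FS^\Gamma(S)$ is not complete and $S$ is not a clique. The key step is then that $\Gamma$ must be non-chordal. Indeed, if $\Gamma$ were chordal and (as here, since $CC(\Gamma)>0$) not complete, it would contain two non-adjacent vertices, hence a vertex cut $S$ would exist; choosing $a,b$ in distinct components of $\Gamma\setminus FS^\Gamma(S)$, minimality of $S$ shows that no proper subset separates $a$ from $b$, so $S$ is a minimal $a$--$b$ separator, and by the classical characterization of chordal graphs every minimal separator is a clique. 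This produces a clique vertex cut, contradicting $CC(\Gamma)\ge 2$.

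Being non-chordal, $\Gamma$ contains an induced (chordless) cycle $C$ of length $m\ge 4$, which as an induced subgraph is a full subgraph with $CC(C)=CC(C_m)=2$ by the computation in the first paragraph. If $V(C)=V(\Gamma)$ then $\Gamma$ equals its own full subgraph on $V(C)$, namely $C$, contradicting that $\Gamma$ is not a cycle; hence $C$ is a \emph{proper} full subgraph with $CC(C)=2$, the desired contradiction. Finally, since $CC(C_3)=0$, the resulting cycle has length $k\ge 4$. I expect the main obstacle to be the non-chordality step: one must carefully match the paper's notion of a vertex cut (an inclusion-minimal separating set) with the graph-theoretic notion of a minimal $a$--$b$ separator in order to invoke that minimal separators of chordal graphs are cliques, and one must verify that the extracted induced cycle really is a proper full subgraph rather than all of $\Gamma$.
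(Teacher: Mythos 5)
Your proposal is correct, but the forward (``only if'') direction takes a genuinely different route from the paper's. The paper argues directly: given a vertex cut $S$ with $m_C(FS^\Gamma(S)) \geq 2$, it picks two non-adjacent vertices $v_1, v_2$ of $FS^\Gamma(S)$ lying in distinct maximal cliques and joins them by two minimal-length paths $p_1, p_2$ running through two distinct components of $\Gamma \setminus FS^\Gamma(S)$; minimality of the paths and the separation of the components force $p_1 \cup p_2$ to be a full subgraph which is a $k$-cycle ($k \geq 4$) with $CC(p_1\cup p_2)=2$, so the hypothesis on proper full subgraphs forces $\Gamma = p_1 \cup p_2$. You instead argue by contraposition and outsource exactly this construction to classical graph theory: your observation that $CC(\Gamma) \geq 2$ forbids clique vertex cuts, combined with Dirac's theorem that minimal $a$--$b$ separators of chordal graphs are cliques, shows $\Gamma$ is non-chordal, and non-chordality hands you the induced cycle of length at least $4$; your care in matching the paper's inclusion-minimal vertex cut with a minimal $a$--$b$ separator is the right glue, and the properness step (together with dismissing the $3$-cycle, which is a clique and so has $CC=0$) is sound. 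What each approach buys: yours is shorter modulo the citation and conceptually places the lemma inside known chordal-graph theory, identifying ``no clique cuts'' as the operative content of $CC(\Gamma)\geq 2$; the paper's construction is self-contained and elementary (it is, in effect, an inlined proof of the relevant direction of Dirac's theorem), and the explicit cycle it extracts is what Proposition \ref{RACG5.3} and Proposition \ref{RACG6.5} later lean on when they produce a full $k$-cycle subgraph. The backward (``if'') direction is essentially the same in both: deleting a vertex from the cycle leaves a disjoint union of paths, each with $CC \leq 1$; you merely verify in more detail that the vertex cuts of a $k$-cycle are exactly the non-adjacent pairs.
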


\begin{proof} Suppose that $CC(G) \leq 1$ for every $G$ proper full subgraph of $\Gamma$. Let $S$ be a vertex cut of $\Gamma$ such that $m_C(S) \geq 2$ and let $\mathcal{C}_S= \lbrace C_1, \ldots , C_k \rbrace$ be the clique twin of $FS^\Gamma (S)$.\\
Let $E_1 , E_2$ be two of the components of $\Gamma \setminus FS^{\Gamma}(S)$. Observe that there are vertices $v_1 , v_2 \in FS^{\Gamma}(S)$ such that they are not connected by an edge in $\Gamma$. Obviously, they belong to distinct cliques. We may assume that $v_1 \in C_1 \setminus C_2$ and $v_2 \in C_2 \setminus C_1$. We note that for every $s\in S$ and every component $E_i$ there exists an edge connecting $s$ with $E_i$ (it follows from the fact that $S$ is a vertex cut of the graph). Thus there exist edge paths $p_i \subseteq E_i \cup C_1 \cup C_2$ connecting $v_1$ with $v_2$ such that $p_i \cap (C_1 \cup C_2) = \lbrace v_1, v_2 \rbrace$. We may assume that these paths have the minimum possible length.\\
Observe that the length of these edge paths is at least two. Trivially, the union $p_1 \cup p_2$ is $k$-cycle where $k \geq 4$.\\
It remains to show that $p_1 \cup p_2$ is a full subgraph of $\Gamma$.
Indeed, it follows from the choice of $v_1$ and $v_2$, the fact that $p_1$, $p_2$ are of minimum length and that $p_1 \setminus (C_1 \cup C_2) $, $p_2 \setminus (C_1 \cup C_2)$ belong to distinct components of $\Gamma \setminus FS^{\Gamma}(S)$.\\
Obviously, $CC(p_1 \cup p_2)=2$. By the hypothesis of lemma we conclude that $\Gamma = p_1 \cup p_2$.

We assume that $\Gamma$ is a $k$-cycle ($k \geq 4$). Let $G$ proper full subgraph of $\Gamma$. Then there is a vertex $v$ of $\Gamma$ such that $G$ is a full subgraph of $\Gamma \setminus v$. 
We observe that if we remove a vertex from $\Gamma$, then the resulting graph $\Gamma^\prime$ is a concatenation  of edges. Trivially, $CC(\Gamma^\prime)=1$ and $CC(G) \leq 1$ for every $G$ full subgraph of $\Gamma^\prime$.

\end{proof}

\section{An increasing property of $dim_{CC}(\ast)$.}
Lemma \ref{RACGdefn3.4} will play a vital role to prove our main theorem but it is not enough for a complete proof, we need something stronger. The main result of this chapter is an interesting increasing property of $dim_{CC}(\ast)$. We will show that $ dim_{CC}(G) < dim_{CC}(\Gamma) $, for some full subgraphs $G$ of $\Gamma$.  



\begin{lem}\label{RACG4.1}
Let $\Gamma$ simplicial graph and let $G$ be a full subgraph of $\Gamma$. Then $m_C (G) \leq m_C (\Gamma)$.
\end{lem}
\begin{proof}
By lemma \ref{RACGdefn3.5}, there exists unique clique twins $\mathcal{C}_G$ of $G$ and $\mathcal{C}_\Gamma$  of $\Gamma$.
By condition (ii) of definition of clique twins we observe that for every $C_G \in \mathcal{C}_G$ there exists a $C_{\Gamma} \in \mathcal{C}_{\Gamma}$ containing $C_G$. In addition to, since $G$ is a full subgraph of $\Gamma$ this correspondence is 1-1 meaning there are no two distinct cliques of $\mathcal{C}_G$ contained in the same clique of $\mathcal{C}_{\Gamma}$. Thus $m_C (G) =\sharp \mathcal{C}_G \leq \sharp \mathcal{C}_\Gamma =  m_C (\Gamma)$.

\end{proof}

\begin{prop}\label{RACG4.2}
Let $\Gamma$ be a finite simplicial graph. Then
$$CC(\Gamma)< m_C (\Gamma).$$
\end{prop}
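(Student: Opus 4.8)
The plan is to reduce to the case that $\Gamma$ is connected and not a clique, and then, for a single vertex cut $S$, to produce one maximal clique of $\Gamma$ that is not accounted for by the clique twin of $FS^\Gamma(S)$. First the reductions. If $\Gamma$ is a clique then $CC(\Gamma)=0$ while $m_C(\Gamma)=1$, so the inequality holds. If $\Gamma$ is disconnected, pick a component $E_0$ realizing $CC(\Gamma)=\min_E CC(E)$; since the clique twin of $\Gamma$ is the union of the clique twins of its components (as in the proof of Lemma \ref{RACGdefn3.1}) we have $m_C(E_0)\le m_C(\Gamma)$, so $CC(\Gamma)=CC(E_0)<m_C(E_0)\le m_C(\Gamma)$ once the connected case is known. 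Hence assume $\Gamma$ is connected and not a clique, so that $CC(\Gamma)=\min_S m_C(FS^\Gamma(S))$, the minimum running over the vertex cuts $S$. It then suffices to exhibit one vertex cut $S$ with $m_C(FS^\Gamma(S))<m_C(\Gamma)$. Fix any vertex cut $S$ and write $G=FS^\Gamma(S)$; by minimality of $S$ every vertex of $S$ has a neighbour in each component $E_1,\dots,E_r$ ($r\ge 2$) of $\Gamma\setminus G$. Since $G$ is a full subgraph, Lemma \ref{RACG4.1} already yields $m_C(G)\le m_C(\Gamma)$, so the entire issue is strictness.

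To organize the comparison I would use the trace map $C\mapsto C\cap S$ from maximal cliques of $\Gamma$ to cliques of $G$. Every maximal clique $D$ of $G$ is a clique of $\Gamma$, hence contained in a maximal clique of $\Gamma$ whose trace is exactly $D$; so if $n_D$ denotes the number of maximal cliques of $\Gamma$ with trace $D$ and $n_0$ the number whose trace is not a maximal clique of $G$ (including those disjoint from $S$), then $m_C(\Gamma)=n_0+\sum_{D\in\mathcal{C}_G}n_D$ with every $n_D\ge 1$, while $m_C(G)=\sharp\mathcal{C}_G=\sum_{D}1$. Thus $m_C(G)=m_C(\Gamma)$ can hold only in the rigid situation $n_0=0$ and $n_D=1$ for all $D$: every maximal clique of $\Gamma$ meets $S$ in a maximal clique of $G$, and distinct maximal cliques of $\Gamma$ have distinct traces. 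The task therefore reduces to ruling this rigid situation out.

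I would rule it out by a dichotomy, each branch contradicting rigidity. If some maximal clique $D$ of $G$ has a common neighbour in two different components $E_i,E_j$, then extending $D$ into each produces two distinct maximal cliques of $\Gamma$ with the same trace $D$, so $n_D\ge 2$. Otherwise I fix one component $E_1$ and use minimality: the maximal cliques of $G$ extending into $E_1$ cover $S$, and since $G$ is not a clique no single maximal clique of $G$ covers $S$, so two distinct maximal cliques $D\ne D'$ of $G$ are witnessed inside $E_1$ by vertices $p,p'\in E_1$ with $p$ adjacent to every vertex of $D$ and $p'$ adjacent to every vertex of $D'$. Choosing non-adjacent $d\in D$ and $d'\in D'$ (possible because $D\cup D'$ is not a clique) and joining $p$ to $p'$ by a shortest path inside the connected component $E_1$, I expect to locate an edge of that path whose endpoints have no common neighbour in $S$ extending to a maximal clique of $G$; the maximal clique of $\Gamma$ through that edge then has a trace that is not maximal in $G$, forcing $n_0\ge 1$. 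In either branch $m_C(FS^\Gamma(S))<m_C(\Gamma)$, whence $CC(\Gamma)\le m_C(FS^\Gamma(S))<m_C(\Gamma)$.

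The main obstacle is precisely this last step, the verification that the rigid situation cannot occur for a minimal cut: one must show that connectivity of a component forces either a clique shared between two components or a component-internal clique with deficient trace. This is the genuinely geometric input, and it is the same shortest-path/cycle device that drives the proof of Lemma \ref{RAACGs3.6}; by contrast the bookkeeping with the trace map is routine given Lemma \ref{RACG4.1}. I would expect small components (a single vertex, or an edge with endpoints seeing disjoint parts of $S$) to be the instructive cases guiding the general argument.
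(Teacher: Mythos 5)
Your reductions, the trace-map bookkeeping (which recovers Lemma \ref{RACG4.1} together with a precise criterion for equality), and Branch A are all correct. The genuine gap is exactly the step you flag as the main obstacle, and it is not merely unproven: it is unprovable, because you fixed an \emph{arbitrary} vertex cut $S$ and are trying to show $m_C(FS^\Gamma(S))<m_C(\Gamma)$, and this is false --- the ``rigid situation'' really occurs. Concretely, let $S=\{1,\dots,10\}$ induce a $10$-cycle $G$, let $E_1$ be a $5$-cycle $z_1z_2z_3z_4z_5$ and $E_2$ a $5$-cycle $w_1w_2w_3w_4w_5$ (no edges between $z$'s and $w$'s), and attach $z_1\sim 1,2,5,6$; $z_2\sim 5,6,9,10$; $z_3\sim 9,10,3,4$; $z_4\sim 3,4,7,8$; $z_5\sim 7,8,1,2$; and symmetrically $w_1\sim 2,3,6,7$; $w_2\sim 6,7,10,1$; $w_3\sim 10,1,4,5$; $w_4\sim 4,5,8,9$; $w_5\sim 8,9,2,3$. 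Then $S$ is a vertex cut in the paper's sense: $\Gamma\setminus FS^\Gamma(S)$ is the two $5$-cycles, and no proper subset of $S$ separates because every $i\in S$ is adjacent to two $z$'s and two $w$'s, which lie on connected cycles. Inspecting neighbourhoods (e.g. the induced graph on the neighbours of $z_1$, namely $\{z_2,z_5,1,2,5,6\}$, is two disjoint triangles $\{z_5,1,2\}$ and $\{z_2,5,6\}$; similarly for the vertices of $S$) shows that the maximal cliques of $\Gamma$ are exactly the ten $4$-cliques $\{z_i,z_{i+1}\}\cup M_i$ and $\{w_i,w_{i+1}\}\cup N_i$, where the $M_i$ run over the even matching $\{1,2\},\{3,4\},\dots$ and the $N_i$ over the odd matching of the $10$-cycle (indices mod $5$ and $10$). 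Hence $m_C(FS^\Gamma(S))=10=m_C(\Gamma)$: every maximal clique of $\Gamma$ has trace a maximal clique of $G$ (so $n_0=0$) and each trace occurs exactly once (so every $n_D=1$). In this example no maximal clique of $G$ has common neighbours in two components, and every edge inside $E_1$ lies in a maximal clique with maximal trace, so both horns of your dichotomy fail; no shortest-path argument inside a component can succeed for this cut.

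What saves the proposition is the freedom to choose $S$, and this is what the paper exploits: its proof takes $S$ to be a \emph{minimal} vertex cut, i.e. one realizing $CC(\Gamma)=m_C(FS^\Gamma(S))$, and only has to prove strictness for that cut. The cut above is far from minimal: the neighbourhood $\{z_2,z_5,1,2,5,6\}$ of $z_1$ contains a vertex cut whose full subgraph is two disjoint triangles, so $CC(\Gamma)\le 2<10=m_C(\Gamma)$ and the proposition survives. So to repair your argument you must feed the minimality of $m_C(FS^\Gamma(S))$ over all cuts into Branch B (roughly: if rigidity held for the minimizing cut, exhibit a cut with strictly fewer cliques, as happens in the example); this is the genuinely non-routine content, and an argument valid for an arbitrary cut does not exist. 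A second, minor and fixable point in the same branch: you assert ``$G$ is not a clique,'' which can fail for a vertex cut; but under rigidity $G$ being a clique forces $m_C(\Gamma)=1$, hence $\Gamma$ a clique, a contradiction, so that assertion can be justified rather than assumed.
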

\begin{proof} It is suffices to show the proposition for connected graphs. If $\Gamma$ is a clique, then the proposition holds. We assume that the graph is not a clique. Then by lemma \ref{RACGdefn3.2} there exists a minimal vertex cut of $\Gamma$, say $S$.\\
By lemma \ref{RACGdefn3.5}, there exists a unique clique twin $\mathcal{C}_S$ of $FS^\Gamma(S)$. We denote by $\mathcal{C}_\Sigma$ the unique clique twin of $\Gamma$. We have $\sharp \mathcal{C}_S = m_C  (FS^\Gamma(S))=CC(\Gamma)$.

By the previous lemma we have that $CC(\Gamma) =\sharp \mathcal{C}_S =  m_C  (FS^\Gamma(S)) \leq    m_C (\Gamma)$.

Let $E_1, \ldots ,E_n$ ($n \geq 2$) be the components of $\Gamma \setminus FS^\Gamma(S)$. Obviously there are no edges connecting $E_i$ with $E_j$ when $i \neq j$. Since $\Gamma$ is connected and $S$ is a vertex cut of the graph we have that for every vertex $s$ of $S$ there exists at least one edge connecting $s$ with $E_i$, for all $i$'s.\\ 
By the above facts, each $C \in \mathcal{C}_S$ is contained in at least two distinct cliques $\Sigma_C^1, \Sigma_C^2$ of $\mathcal{C}_\Sigma$. Since $FS^\Gamma(S)$ is a full subgraph of $\Gamma$, the cliques $\Sigma_C^i, \Sigma_{C^{\prime}}^j$ are distinct when $C \neq C^\prime$, for $i,j \in \lbrace1,2\rbrace$ and $C , C^\prime \in \mathcal{C}_S$. We conclude that the clique twin of $\Gamma$ must contain at least one more clique than $\mathcal{C}_S$. Thus $CC(\Gamma)< m_C (\Gamma)$.

\end{proof}

\begin{prop}\label{RACG4.3}
Let $\Gamma$ be a connected finite simplicial graph and $S$ be a minimal vertex cut of $\Gamma$. Then
$$CC(FS^\Gamma(S))< CC (\Gamma).$$
\end{prop}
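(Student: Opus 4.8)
The plan is to obtain this inequality directly from Proposition \ref{RACG4.2} combined with the defining property of a minimal vertex cut. By definition, the statement that $S$ is a minimal vertex cut of $\Gamma$ means exactly that $CC(\Gamma) = m_C(FS^\Gamma(S))$, this being the value that realizes the minimum of $m_C(FS^\Gamma(S'))$ over all vertex cuts $S'$ of $\Gamma$. Hence the right-hand side $CC(\Gamma)$ may be replaced by $m_C(FS^\Gamma(S))$, and the entire claim reduces to showing that $CC(FS^\Gamma(S)) < m_C(FS^\Gamma(S))$.

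First I would observe that $\Gamma$ cannot be a clique, since a clique admits no vertex cut; the hypothesis therefore guarantees that a minimal vertex cut exists and that $CC(\Gamma)\geq 1$ is meaningful. Then I would simply apply Proposition \ref{RACG4.2} to the finite simplicial graph $FS^\Gamma(S)$ in its own right. That proposition is stated for an arbitrary finite simplicial graph and does not require connectedness, so the fact that $FS^\Gamma(S)$ may be disconnected poses no problem. It yields
$$CC(FS^\Gamma(S)) < m_C(FS^\Gamma(S)).$$
Substituting the equality $m_C(FS^\Gamma(S)) = CC(\Gamma)$ coming from the minimality of $S$ gives
$$CC(FS^\Gamma(S)) < m_C(FS^\Gamma(S)) = CC(\Gamma),$$
which is the assertion.

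Because all of the substantive work is already contained in Proposition \ref{RACG4.2}, no genuine obstacle arises at this level. The only points that warrant attention are bookkeeping ones: ensuring that ''minimal vertex cut'' is used in the sense of attaining the minimum $m_C(FS^\Gamma(S)) = CC(\Gamma)$ rather than merely being inclusion-minimal among separating sets, and confirming that in Proposition \ref{RACG4.2} the invariants $CC$ and $m_C$ are computed intrinsically for $FS^\Gamma(S)$ as a free-standing graph rather than relative to the ambient $\Gamma$. Both hold exactly as the definitions require, so the argument goes through without further effort.
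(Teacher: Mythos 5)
Your proposal is correct and follows exactly the paper's argument: apply Proposition \ref{RACG4.2} to the graph $FS^\Gamma(S)$ to get $CC(FS^\Gamma(S)) < m_C(FS^\Gamma(S))$, and then use the defining property of a minimal vertex cut, namely $m_C(FS^\Gamma(S)) = CC(\Gamma)$. The extra care you take about disconnectedness of $FS^\Gamma(S)$ and about the intended meaning of ``minimal vertex cut'' is sound but not needed beyond what the definitions already provide.
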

\begin{proof}
By proposition \ref{RACG4.2} we have that $CC(FS^\Gamma(S))< m_C(FS^\Gamma(S)) = CC(\Gamma)$.
\end{proof}

The next theorem is the main result of this section.
\begin{thm}\label{RACGthm4}
Let $\Gamma$ be a connected finite simplicial graph and $S$ be a minimal vertex cut of $\Gamma$. Then for every full subgraph $G$ of $FS^\Gamma(S)$ we have the following:
$$dim_{CC}(G)< dim_{CC} (\Gamma).$$
In particular, $dim_{CC}(FS^\Gamma(S))< dim_{CC} (\Gamma).$
\end{thm}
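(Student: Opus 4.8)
If $G$ is any full subgraph of $F:=FS^\Gamma(S)$, then $G$ is a full subgraph of $F$ and Lemma \ref{RACGdefn3.4} gives $dim_{CC}(G)\le dim_{CC}(F)$; so it suffices to prove the \emph{in particular} assertion $dim_{CC}(F)<dim_{CC}(\Gamma)$. Since $\Gamma$ is finite, the supremum defining $dim_{CC}(F)$ is attained by some full subgraph $H_0$ of $F$. If $H_0$ is disconnected, each component $D$ is a full subgraph of $F$, so $CC(D)\le dim_{CC}(F)=CC(H_0)=\min_D CC(D)\le CC(D)$, forcing $CC(D)=dim_{CC}(F)$; replacing $H_0$ by one component I may assume $H_0$ is connected. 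Writing $T:=V(H_0)\subseteq S$, the goal becomes: produce a full subgraph $\tilde{H}$ of $\Gamma$ with $CC(\tilde{H})>CC(H_0)$, for then $dim_{CC}(\Gamma)\ge CC(\tilde{H})>CC(H_0)=dim_{CC}(F)\ge dim_{CC}(G)$.

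\textbf{The reconnecting construction.} Let $E_1,E_2$ be two distinct components of $\Gamma\setminus F$ (there are at least two, as $S$ is a vertex cut). Each $E_i$ is connected and, as noted in the proof of Proposition \ref{RACG4.2}, every vertex of $S$, in particular every $t\in T$, has a neighbour in each $E_i$. Choose in each $E_i$ a minimal connected subgraph $A_i$ such that every $t\in T$ still has a neighbour in $A_i$ (when a single vertex of $E_i$ is adjacent to all of $T$ one takes $A_i$ to be that vertex). Put $\tilde{H}:=FS^\Gamma(T\cup V(A_1)\cup V(A_2))$, a full subgraph of $\Gamma$ with $FS^{\tilde{H}}(T)=H_0$. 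Then $T$ is a minimal vertex cut of $\tilde{H}$: deleting $T$ leaves $A_1,A_2$ in distinct components (no edges run between $E_1$ and $E_2$), while deleting any proper subset $T'\subsetneq T$ keeps $\tilde{H}$ connected, since $A_1\cup(T\setminus T')\cup A_2$ is connected ($A_i$ is connected, attaches to each vertex of $T\setminus T'\neq\varnothing$, and the $A_i$ share these vertices). One might hope to invoke Proposition \ref{RACG4.3} at this point, but that would require $T$ to be $m_C$-minimising in $\tilde{H}$, which is not automatic; instead I bound $CC(\tilde{H})$ directly.

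\textbf{Bounding $CC(\tilde{H})$ in the model case.} Suppose each $A_i$ is a single vertex $a_i\in E_i$ adjacent to all of $T$, so $\tilde{H}$ has vertex set $T\cup\{a_1,a_2\}$ with $a_1,a_2$ joined to all of $T$ and $a_1\not\sim a_2$. A short case-check shows the only vertex cuts of $\tilde{H}$ are $T$ itself and the sets $\{a_1,a_2\}\cup K$ with $K$ a separating set of $H_0$: since $a_1$ (resp. $a_2$) reconnects $\tilde{H}$ after deleting any proper subset of $T$, no cut omits both $a_1,a_2$ unless it contains all of $T$. For $S'=T$, Proposition \ref{RACG4.2} gives $m_C(FS^{\tilde{H}}(T))=m_C(H_0)>CC(H_0)$. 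For $S'=\{a_1,a_2\}\cup K$, each maximal clique $Q$ of $FS^{H_0}(K)$ yields exactly the two maximal cliques $Q\cup\{a_1\}$ and $Q\cup\{a_2\}$ of $FS^{\tilde{H}}(S')$, whence $m_C(FS^{\tilde{H}}(S'))=2\,m_C(FS^{H_0}(K))\ge 2\,CC(H_0)$; here the last inequality uses Lemma \ref{RACG4.1} to compare $FS^{H_0}(K)$ with a genuine vertex cut of $H_0$ contained in $K$. As $2\,CC(H_0)>CC(H_0)$ when $CC(H_0)\ge 1$, and $CC(H_0)=0$ forces $H_0$ to be a clique by Lemma \ref{RACGdefn3.2} (so no such $K$ exists), every vertex cut $S'$ satisfies $m_C(FS^{\tilde{H}}(S'))>CC(H_0)$, giving $CC(\tilde{H})>CC(H_0)$.

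\textbf{Main obstacle.} The clique-doubling above is exact only when the $A_i$ are single common neighbours of $T$; the delicate step is the case where no vertex of a component is adjacent to all of $T$, so the $A_i$ must be genuine connected subgraphs. One must then verify that the \emph{minimal} choice of $A_i$ introduces no vertex cut of $\tilde{H}$ with small $m_C$. The guiding point is that, because $T$ is connected and every piece of a minimal $A_i$ attaches to $T$, no single vertex of $A_i$ can be a cut vertex of $\tilde{H}$, and more generally any separation of the $A_1$-side from the $A_2$-side is forced to delete enough of $T$ that a clique-doubling lower bound of the same shape still applies. Making this precise for arbitrary minimal reconnectors, and thereby confirming $m_C(FS^{\tilde{H}}(S'))>CC(H_0)$ for \emph{every} vertex cut $S'$ of $\tilde{H}$, is the crux of the argument.
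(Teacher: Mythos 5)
Your proposal is incomplete, and you say so yourself: the ``Main obstacle'' paragraph concedes that the case where the reconnectors $A_i$ are genuine connected subgraphs rather than single vertices adjacent to all of $T$ --- i.e.\ the generic case --- is left unproved. The clique-doubling count that drives your model case ($m_C(FS^{\tilde{H}}(S'))=2\,m_C(FS^{H_0}(K))$) relies on every vertex cut of $\tilde{H}$ being either $T$ or $\{a_1,a_2\}\cup K$; once the $A_i$ have internal structure, vertex cuts of $\tilde{H}$ can mix vertices of $A_1$, $A_2$ and $T$ in ways your case analysis does not control, and nothing you write rules out such a cut having small $m_C$. So what you have is a correct special case plus a program, not a proof.

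The deeper issue is that the construction is a detour, and in taking it you discard the one hypothesis that makes the theorem easy: $S$ is a \emph{minimal} vertex cut, which by definition means $m_C(FS^\Gamma(S))=CC(\Gamma)$. You never use this identity (your argument would, if completed, apply to any vertex cut), which is a sign you are proving something harder than required. The paper's proof is a three-step chain: for any full subgraph $H$ of $G$, Proposition \ref{RACG4.2} gives $CC(H)<m_C(H)$; Lemma \ref{RACG4.1} (monotonicity of $m_C$ under full subgraphs, applied twice) gives $m_C(H)\le m_C(G)\le m_C(FS^\Gamma(S))$; and minimality of $S$ gives $m_C(FS^\Gamma(S))=CC(\Gamma)\le dim_{CC}(\Gamma)$. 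Taking the maximum over the finitely many full subgraphs $H$ of $G$ yields $dim_{CC}(G)<dim_{CC}(\Gamma)$ with no auxiliary graph $\tilde{H}$ needed. You had all the ingredients --- you invoke both Proposition \ref{RACG4.2} and Lemma \ref{RACG4.1} inside your model case --- but applied them to the wrong objects: apply them directly to the subgraphs $H$ of $FS^\Gamma(S)$ and chain through $m_C$, rather than trying to manufacture full subgraphs of $\Gamma$ with strictly larger $CC$.
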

\begin{proof} It is suffices to show the theorem only for connected full subgraphs, so we assume that $G$ is connected.\\
Let $H$ be a full subgraph of $G$, observe that $H$ is a full subgraph of $\Gamma$ as well.
By proposition \ref{RACG4.2} and lemma \ref{RACG4.1} we have that $CC(H)< m_C (H) \leq m_C(G) \leq m_C(FS^\Gamma(S))=CC (\Gamma) $. Thus $CC(H)<CC(\Gamma)$.  

\end{proof}

\section{Asymptotic dimension of RACGs.}

\begin{thm}\label{RACGthm5.1}
Let $W_\Gamma$ be the Right-Angled Coxeter group with connected defining graph $\Gamma$. Then $W_{\Gamma}$ is the fundamental group of a graph of group such that $asdim(G_v) \leq dim_{CC}(\Gamma)$ for every vertex groups and $asdim(G_e) < dim_{CC}(\Gamma)$ for every edge group.
In particular,
$$asdim W_\Gamma \leq dim_{CC} (\Gamma).$$
If $\Gamma$ is not connected, then $asdim W_\Gamma \leq max \lbrace 1, dim_{CC} (\Gamma) \rbrace .$
\end{thm}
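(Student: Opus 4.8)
The plan is to set up an induction on the number of vertices of $\Gamma$, using a minimal vertex cut to exhibit $W_\Gamma$ as the fundamental group of a graph of groups whose vertex and edge groups are standard parabolic subgroups $W_{\Gamma_X}$ corresponding to full subgraphs of $\Gamma$. The key structural input is that, for a Coxeter group, if $S$ is a vertex cut separating $\Gamma \setminus FS^\Gamma(S)$ into components $E_1,\dots,E_n$, then $W_\Gamma$ splits as a graph of groups (an amalgam, or more generally a tree of groups) over the parabolic subgroup $W_{FS^\Gamma(S)}$. Concretely, I would take the vertex groups to be the parabolics $W_{FS^\Gamma(S \cup E_i)}$ generated by the cut together with each component, and the edge groups to be copies of $W_{FS^\Gamma(S)}$. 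That such a splitting exists for RACGs is standard: commuting relations only occur along edges of $\Gamma$, so once we remove the full subgraph on $S$, generators in distinct components $E_i, E_j$ never commute and never interact except through $S$, which is exactly the amalgamation condition.

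First I would verify the splitting: show that $W_\Gamma = \pi_1(\mathbb{G}, Y, \mathbb{T})$ where $Y$ is a suitable finite graph (a star or tree with central edge group $W_{FS^\Gamma(S)}$), the vertex groups are the $W_{FS^\Gamma(S \cup E_i)}$, and each edge group equals $W_{FS^\Gamma(S)}$. This uses the normal form / standard theory of parabolic subgroups cited via \cite{EM}, together with the observation already appearing in the proof of Proposition \ref{RACG4.2} that there are no edges between distinct components $E_i, E_j$. Next I would apply Theorem \ref{RACGT2.1} to this graph of groups, which bounds $asdim\, W_\Gamma$ by $\max_{v,e}\{ asdim\, G_v, asdim\, G_e + 1\}$. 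For the edge groups, $asdim\, W_{FS^\Gamma(S)} = asdim\, W_{FS^\Gamma(S)}$, and by the inductive hypothesis (applied to the strictly smaller full subgraph $FS^\Gamma(S)$) together with Theorem \ref{RACGthm4}, we get $asdim\, W_{FS^\Gamma(S)} \leq dim_{CC}(FS^\Gamma(S)) < dim_{CC}(\Gamma)$, hence $asdim\, G_e + 1 \leq dim_{CC}(\Gamma)$. For the vertex groups, each $W_{FS^\Gamma(S \cup E_i)}$ is associated to a full subgraph of $\Gamma$, so by induction (or by the monotonicity Lemma \ref{RACGdefn3.4}) its asymptotic dimension is at most $dim_{CC}$ of that subgraph, which is $\leq dim_{CC}(\Gamma)$.

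\textbf{The main obstacle} I anticipate is ensuring the vertex groups $W_{FS^\Gamma(S \cup E_i)}$ genuinely have strictly fewer vertices than $\Gamma$, so the induction is well-founded: this holds precisely because $n \geq 2$ forces each $E_i$ to omit at least one other component, so $S \cup E_i \subsetneq V(\Gamma)$. The base case is a clique, where $W_\Gamma$ is finite and $dim_{CC}(\Gamma) = 0 = asdim\, W_\Gamma$ by Lemma \ref{RACGdefn3.2}. The strict inequality for edge groups is the crux of the whole argument and is exactly what Theorem \ref{RACGthm4} was built to supply; without it one could only bound $asdim\, G_e + 1$ by $dim_{CC}(\Gamma) + 1$, which would lose the sharp bound. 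Finally, for the disconnected case I would split $W_\Gamma$ as a free product over the components of $\Gamma$ and invoke Theorem \ref{RACGT2.2}, which introduces the $\max\{1, \cdot\}$: the free product of the (parabolic) component groups has asymptotic dimension $\max\{1, \max_i asdim\, W_{\Gamma_i}\} \leq \max\{1, dim_{CC}(\Gamma)\}$, since each component is a full subgraph and $dim_{CC}(\Gamma) = \min_i CC(\Gamma_i)$ is controlled by Lemma \ref{RACGdefn3.4}.
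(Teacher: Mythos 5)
Your proposal matches the paper's proof essentially step for step: the same induction on $\sharp V(\Gamma)$, the same splitting of $W_\Gamma$ as an iterated amalgam (graph of groups) over the parabolic subgroup $W_{FS^\Gamma(S)}$ of a minimal vertex cut, the same use of Theorem \ref{RACGthm4} to get the strict inequality for edge groups and of Lemma \ref{RACGdefn3.4} plus the inductive hypothesis for the vertex groups $W_{\overline{E}_i}$, and the same reduction of the disconnected case via Theorem \ref{RACGT2.2}. (One cosmetic slip: for disconnected $\Gamma$ it is $CC(\Gamma)$, not $dim_{CC}(\Gamma)$, that equals the minimum of $CC$ over components; the bound you actually need is just monotonicity of $dim_{CC}$, which you also cite, so nothing breaks.)
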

\begin{proof} We will use induction on $\sharp V(\Gamma)$. If $\sharp V(\Gamma)=1$, the theorem is obviously true. We assume that for any graph with $\sharp V(\Gamma)< N+1$ the theorem holds. Let $\Gamma$ be a graph such that $\sharp V(\Gamma)= N+1$.

By theorem \ref{RACGT2.2} it is enough to prove the inequality only for RACGs with connected defining graphs. So we assume that the graph is connected. If the graph $\Gamma$ is a clique, then by lemma \ref{RACGdefn3.2} the theorem holds. So we further assume that the graph is not a clique.\\
Since the graph is not a clique there is a subset of its vertices separating it, thus there is at least one vertex cut of $\Gamma$. Let $S$ be a minimal vertex cut of the graph and let $E_1, \ldots , E_k$ be the connected components of $\Gamma \setminus FS^\Gamma(S)$. Observe that since $S$ is a vertex cut we have that for every vertex $s$ of $S$ and every component $E_i$ there exists at least one edge connecting them, thus $E_i \cup FS^\Gamma(S)$ is connected. We further note that $FS^\Gamma (E_i \cup S)= E_i \cup FS^\Gamma (S)$, so $\overline{E}_i = E_i \cup FS^\Gamma(S)$ is a full subgraph of $\Gamma$, and thus by lemma \ref{RACGdefn3.4} $dim_{CC}(\overline{E}_i) \leq dim_{CC}(\Gamma)$. By the inductive hypothesis $asdim W_{\overline{E}_i} \leq dim_{CC}(\overline{E}_i)$, so 
\begin{equation}
asdim W_{\overline{E}_i} \leq dim_{CC}(\Gamma),
\end{equation}

where $ W_{\overline{E}_i} $ is the parabolic subgroup of $W_\Gamma$ defined by $\overline{E}_i$ (of course $W_{\overline{E}_i}$ is RACG). Using theorem \ref{RACGthm4} we have $dim_{CC}(FS^\Gamma (S)) < dim_{CC}(\Gamma)$. By the inductive hypothesis $asdim W_{FS^\Gamma (S)} \leq dim_{CC}(FS^\Gamma (S))$, then
\begin{equation}
asdim W_{FS^\Gamma (S)} < dim_{CC}(\Gamma),
\end{equation}

where $ W_{FS^\Gamma (S)} $ is the parabolic subgroup of $W_\Gamma$ defined by $FS^\Gamma (S)$ (of course $W_{FS^\Gamma (S)}$ is RACG).\\
Finally, observe that $W_\Gamma$ can be obtained from $W_{\overline{E}_i}$ after a finite sequence of amalgamated product over $W_{FS^\Gamma (S)} $. To be more precise, 

\begin{equation}
 W_{\Gamma} = (W_{\overline{E}_1 } \underset{W_{FS^\Gamma (S)}}{\ast} W_{\overline{E}_2}) \underset{W_{FS^\Gamma (S)}}{\ast} \ldots W_{\overline{E}_k}.
\end{equation}

In other words, $W_{\Gamma}$ is the fundamental group of a graph of groups with vertex groups $W_{\overline{E}_i }$ and $W_{FS^\Gamma (S)}$, and edge groups isomorphic to $W_{FS^\Gamma (S)}$. Applying theorem \ref{RACGT2.0} or theorem \ref{RACGT2.1} we conclude that $asdim W_\Gamma \leq dim_{CC} (\Gamma).$

\end{proof}

As a corollary we have: 
\begin{thm}\label{RACGthm1.2}
Let $W_\Gamma$ be the Right-Angled Coxeter group with defining graph $\Gamma$. Then
$$asdim W_\Gamma \leq min \lbrace dim_{CC} (\Gamma), dim\Sigma(W_\Gamma) \rbrace.$$
If $\Gamma$ is not connected then $asdim W_\Gamma \leq  max \lbrace 1, min \lbrace dim_{CC} (\Gamma) , dim\Sigma(W_\Gamma) \rbrace  \rbrace.$
\end{thm}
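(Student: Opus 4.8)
The plan is to derive this statement as a direct corollary of the main theorem, combined with the Davis-complex bound of Dranishnikov. We already have two independent upper bounds for $asdim W_\Gamma$: Theorem \ref{RACGthm5.1} supplies $asdim W_\Gamma \leq dim_{CC}(\Gamma)$ whenever $\Gamma$ is connected, while Dranishnikov's isometric embedding result (see \cite{Dra08}) supplies $asdim W_\Gamma \leq dim\Sigma(W_\Gamma)$ for every $\Gamma$. Since the minimum of two valid upper bounds is again a valid upper bound, for connected $\Gamma$ we immediately obtain $asdim W_\Gamma \leq \min\{dim_{CC}(\Gamma), dim\Sigma(W_\Gamma)\}$, which is the first assertion.

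For the disconnected case, I would first reduce to the connected components using the free-product structure of $W_\Gamma$. If $E_1, \ldots, E_n$ are the connected components of $\Gamma$, then there are no commuting relations across components, so $W_\Gamma = W_{E_1} \ast \cdots \ast W_{E_n}$. By Theorem \ref{RACGT2.2} this gives $asdim W_\Gamma = \max\{1, \max_i asdim W_{E_i}\}$, and applying the connected case just established to each $E_i$ yields $asdim W_{E_i} \leq \min\{dim_{CC}(E_i), dim\Sigma(W_{E_i})\}$.

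The remaining step is to compare each $\min\{dim_{CC}(E_i), dim\Sigma(W_{E_i})\}$ with $\min\{dim_{CC}(\Gamma), dim\Sigma(W_\Gamma)\}$. Since each $E_i$ is a full subgraph of $\Gamma$, Lemma \ref{RACGdefn3.4} gives $dim_{CC}(E_i) \leq dim_{CC}(\Gamma)$, and because $W_{E_i}$ is a parabolic subgroup its Davis complex embeds as a subcomplex of $\Sigma(W_\Gamma)$, so $dim\Sigma(W_{E_i}) \leq dim\Sigma(W_\Gamma)$. Monotonicity of the minimum then yields $\min\{dim_{CC}(E_i), dim\Sigma(W_{E_i})\} \leq \min\{dim_{CC}(\Gamma), dim\Sigma(W_\Gamma)\}$ for each $i$, and taking the maximum over $i$ gives $asdim W_\Gamma \leq \max\{1, \min\{dim_{CC}(\Gamma), dim\Sigma(W_\Gamma)\}\}$, as claimed.

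I do not expect a genuine obstacle here: the argument is essentially bookkeeping, since neither the main theorem nor Dranishnikov's bound needs to be reproved. The only place requiring real care is the disconnected case, where one must check that both $dim_{CC}$ and $dim\Sigma$ do not increase when passing to a component (this is exactly where Lemma \ref{RACGdefn3.4} and the parabolic-subcomplex observation are used) before the minimum and the free-product formula from Theorem \ref{RACGT2.2} can be combined.
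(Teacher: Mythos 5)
Your proposal is correct and matches the paper's (implicit) argument: the paper states this result as an immediate corollary of Theorem \ref{RACGthm5.1} combined with Dranishnikov's Davis-complex bound from \cite{Dra08}, which is exactly what you do, with the disconnected case's bookkeeping (free-product formula plus monotonicity of both quantities under passing to components) spelled out. One small slip: the bound $asdim\, W_\Gamma \leq dim\Sigma(W_\Gamma)$ in \cite{Dra08} is not the ``isometric embedding'' result --- that is Januszkiewicz's theorem \cite{TJ}, which gives the weaker bound $\sharp V(\Gamma)$ --- but this mislabeling does not affect the argument.
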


\section{Corollaries of the main result.}


\begin{prop}\label{RACG5.3} 
Let $\Gamma$ be a simplicial graph such that $dim_{CC}(\Gamma) \geq2$. Then 
the Right-Angled Coxeter group $W_\Gamma$ defined by $\Gamma$ contains an one-ended parabolic subgroup.
\end{prop}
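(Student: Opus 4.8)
The plan is to exhibit the required subgroup concretely, as the right-angled Coxeter group of an induced cycle of length at least four. First I would reduce to a connected witness for the hypothesis $dim_{CC}(\Gamma)\geq 2$. By Definition \ref{RACGdefn2.4} and finiteness of $\Gamma$ there is a full subgraph $G_0$ of $\Gamma$ with $CC(G_0)\geq 2$. Since the value of $CC$ on a graph is the minimum of its values on the connected components, and since each connected component of a full subgraph of $\Gamma$ is itself a full subgraph of $\Gamma$, I may pass to a component and assume from now on that there is a connected full subgraph $G$ of $\Gamma$ with $CC(G)\geq 2$.

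Next I would choose $G$ minimal. Among all connected full subgraphs of $\Gamma$ with $CC\geq 2$ pick one, still denoted $G$, with the least number of vertices. Let $H$ be any proper full subgraph of $G$; then $V(H)\subsetneq V(G)$, so every connected component $E$ of $H$ is a connected full subgraph of $\Gamma$ with $\sharp V(E)<\sharp V(G)$. By minimality $CC(E)\leq 1$ for each such $E$, whence $CC(H)=\min_E CC(E)\leq 1$. Thus $G$ is connected, satisfies $CC(G)\geq 2$, and every proper full subgraph of $G$ has $CC\leq 1$, so Lemma \ref{RAACGs3.6} applies and shows that $G$ is a $k$-cycle with $k\geq 4$.

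Since $G$ is a full subgraph of $\Gamma$, the group $W_G=W_{\Gamma_{V(G)}}$ is a standard parabolic subgroup of $W_\Gamma$, so it remains only to verify that $W_{C_k}$ is one-ended for $k\geq 4$; this is the crux of the argument. For $k=4$ a direct computation gives $W_{C_4}\cong D_\infty\times D_\infty$, a product of two infinite groups and hence one-ended. For $k\geq 5$ the group $W_{C_k}$ is the reflection group of a right-angled $k$-gon in the hyperbolic plane, i.e. a cocompact Fuchsian group, which is one-ended. Both are instances of the standard criterion from the structure theory of Coxeter groups (Davis): a connected, non-clique defining graph $\Delta$ gives a one-ended $W_\Delta$ if and only if $\Delta$ has no separating clique, while if a separating clique exists then $W_\Delta$ splits over a finite standard parabolic and has more than one end (Stallings' theorem). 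A cycle of length at least four has no separating clique, so $W_{C_k}$ is one-ended, completing the proof.

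I expect the translation in the third paragraph --- from the combinatorial conclusion ``$G$ is an induced cycle'' to the group-theoretic statement ``$W_G$ is one-ended'' --- to be the main obstacle, since it is the only point that leaves the purely graph-theoretic framework of the paper and appeals to the end theory of Coxeter groups. An alternative that avoids singling out a cycle is to observe that $CC(G)\geq 2$ is, by Definition \ref{RACGdefn2.3} together with the clique-twin count (Lemma \ref{RACGdefn3.5}), exactly the statement that no vertex cut of $G$ induces a clique, i.e. that $G$ has no separating clique; feeding this directly into the same end criterion yields one-endedness of $W_G$ without the intermediate cycle, at the cost of invoking the general criterion rather than the classical cycle computation.
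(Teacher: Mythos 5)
Your proposal is correct and takes essentially the same route as the paper: choose a minimal full subgraph $G$ of $\Gamma$ with $CC(G)\geq 2$, apply Lemma \ref{RAACGs3.6} to identify $G$ as an induced $k$-cycle ($k\geq 4$), and conclude that the standard parabolic subgroup $W_G$ is one-ended --- the paper handles this last step by citing Theorem 8.7.2 of Davis, where you verify it concretely ($D_\infty \times D_\infty$ for $k=4$, cocompact Fuchsian for $k\geq 5$). Your closing alternative, reading $CC(G)\geq 2$ directly as ``no vertex cut of $G$ spans a clique'' and feeding that into the ends criterion without passing through the cycle, is also sound and slightly more direct than what the paper does.
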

\begin{proof}
$\Gamma$ contains a full subgraph $G$ such that $CC(G) \geq 2$. We assume that $G$ is a minimal full subgraph of $\Gamma$ such that $CC(G) \geq 2$. Trivially, $G$ is connected.
Since $G$ is minimal we have that $CC(G^\prime) \leq 1$ for every $G^\prime$ proper full subgraph of $G$, so by lemma \ref{RAACGs3.6} $G$ is a $k$-cycle ($k \geq 4$).\\
By theorem 8.7.2 of \cite{Davis} we have that the parabolic subgroup $W_G$ of $W_\Gamma$ defined by $G$ is one-ended.
\end{proof}

\begin{prop}\label{RACG5.31}
Let $W_\Gamma$ be the Right-Angled Coxeter group with connected defining graph $\Gamma$. If $dim_{CC}(\Gamma)\leq 2$, then $asdim W_\Gamma = dim_{CC}(\Gamma)$.
\end{prop}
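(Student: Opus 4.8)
```latex
The plan is to prove the equality $asdim W_\Gamma = dim_{CC}(\Gamma)$ under the hypothesis $dim_{CC}(\Gamma) \leq 2$ by establishing the two inequalities separately. The upper bound $asdim W_\Gamma \leq dim_{CC}(\Gamma)$ is immediate from Theorem \ref{RACGthm5.1}, since $\Gamma$ is connected. Thus the entire content of the proposition lies in the reverse inequality $dim_{CC}(\Gamma) \leq asdim W_\Gamma$, and I would split the argument into the three possible values of $dim_{CC}(\Gamma)$, namely $0$, $1$, and $2$.

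The case $dim_{CC}(\Gamma)=0$ is handled by Lemma \ref{RACGdefn3.2}: a connected graph has $dim_{CC}(\Gamma)=0$ precisely when it is a clique, and in that case $W_\Gamma$ is finite, so $asdim W_\Gamma = 0$ and equality holds. For $dim_{CC}(\Gamma)=1$, I would argue that $W_\Gamma$ must be infinite (since $\Gamma$ is not a clique, by Lemma \ref{RACGdefn3.2}), and any infinite finitely generated group has asymptotic dimension at least $1$; combined with the upper bound, this forces $asdim W_\Gamma = 1$.

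The essential case is $dim_{CC}(\Gamma)=2$, and here the key tool is Proposition \ref{RACG5.3}. Since $dim_{CC}(\Gamma) \geq 2$, that proposition guarantees that $W_\Gamma$ contains a one-ended parabolic subgroup $W_G$, where $G$ is a $k$-cycle with $k \geq 4$. A one-ended finitely generated group has asymptotic dimension at least $2$ (a one-ended group cannot have asymptotic dimension $\leq 1$, as groups of asymptotic dimension at most one are virtually free, hence either finite or with infinitely many ends). Since asymptotic dimension is monotone under passing to finitely generated subgroups, I obtain $asdim W_\Gamma \geq asdim W_G \geq 2 = dim_{CC}(\Gamma)$, which together with the upper bound yields equality.

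The main obstacle I anticipate is justifying cleanly that a one-ended group has asymptotic dimension at least $2$, i.e., that $asdim = 1$ implies virtually free. This is a known result (for instance via the characterization of groups with asymptotic dimension one, or through the structure theory relating $asdim \leq 1$ to actions on trees), and since Proposition \ref{RACG1.4} in this very paper asserts that $W_\Gamma$ is virtually free if and only if $dim_{CC}(\Gamma)=1$, I would expect to invoke the same circle of ideas. The delicate point is ensuring the argument for Proposition \ref{RACG1.4} does not itself depend on the present proposition, so as to avoid circularity; assuming the virtual-freeness characterization is established independently, the proof of this proposition reduces to the clean case analysis above.
```
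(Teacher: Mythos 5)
Your proposal is correct and follows essentially the same route as the paper: the upper bound from Theorem \ref{RACGthm5.1}, the clique case via Lemma \ref{RACGdefn3.2}, the infinite-dihedral/infiniteness argument when $dim_{CC}(\Gamma)=1$, and for $dim_{CC}(\Gamma)=2$ the one-ended parabolic subgroup from Proposition \ref{RACG5.3} combined with the theorem of \cite{Ge} that one-ended finitely presented groups have asymptotic dimension at least two. Your worry about circularity resolves exactly as you hoped: the paper invokes the external result of \cite{Ge} here, and its virtually-free characterization (Proposition \ref{RACG6.3}) is deduced from the present proposition rather than the other way around.
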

\begin{proof} If $dim_{CC}(\Gamma)=0$, then $\Gamma$ is a clique, so $W_\Gamma$ is finite. Then $asdim W_\Gamma =0$.

If $dim_{CC}(\Gamma)=1$, then by theorem \ref{RACGthm5.1} we have $asdim W_\Gamma \leq 1$. By lemma \ref{RACGdefn3.2} $\Gamma$ is not a clique and thus there are two vertices $a,b$ which are not connected by an edge. This means that $W_\Gamma$ contains $\mathbb{Z}_2 \ast \mathbb{Z}_2$ as a parabolic subgroup, so $asdim W_\Gamma = 1$. 

If $dim_{CC}(\Gamma)=2$, then by theorem \ref{RACGthm5.1} we have $asdim W_\Gamma \leq 2$. By proposition \ref{RACG5.3} we have that there exists an one-ended parabolic subgroup $W_G$ of $W_\Gamma$. Then by the main theorem of \cite{Ge} we obtain that $2 \leq asdim W_G$.
So $asdim W_\Gamma = 2$.


\end{proof}

\begin{cor}\label{RACGCor5}
Let $W_\Gamma$ be the Right-Angled Coxeter group with connected defining graph $\Gamma$. Then $W_\Gamma$ is finite if and only if $dim_{CC}(\Gamma)=0$.
\end{cor}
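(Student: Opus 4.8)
The plan is to deduce the corollary as the composition of two equivalences, one purely group-theoretic and one purely graph-theoretic. The graph-theoretic half is already available: by Lemma~\ref{RACGdefn3.2}, for a connected graph $\Gamma$ the condition $dim_{CC}(\Gamma)=0$ holds if and only if $\Gamma$ is a clique. It therefore suffices to prove the classical statement that, for a RACG, $W_\Gamma$ is finite if and only if its defining graph $\Gamma$ is complete. Chaining the two equivalences then gives the claim with no further work.

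To establish the group-theoretic equivalence I would argue both directions directly. For the reverse implication, suppose $\Gamma$ is a clique on $n$ vertices. Then every pair of distinct generators commutes and each generator is an involution, so $W_\Gamma \cong (\mathbb{Z}_2)^{n}$, which is finite. For the forward implication I would take the contrapositive: if $\Gamma$ is not a clique, then there are two vertices $a,b \in V(\Gamma)$ not joined by an edge. Using the parabolic subgroup structure recalled in the Preliminaries, the subgroup $W_{\{a,b\}}$ generated by $\{a,b\}$ is the standard parabolic subgroup associated to the full subgraph on $\{a,b\}$, namely $\langle a,b \mid a^2, b^2\rangle \cong \mathbb{Z}_2 \ast \mathbb{Z}_2$, the infinite dihedral group. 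Since $W_\Gamma$ then contains an infinite subgroup, it is itself infinite.

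Finally I would combine the two equivalences: $W_\Gamma$ is finite if and only if $\Gamma$ is a clique (the group-theoretic step above), and $\Gamma$ is a clique if and only if $dim_{CC}(\Gamma)=0$ (Lemma~\ref{RACGdefn3.2}). Hence $W_\Gamma$ is finite if and only if $dim_{CC}(\Gamma)=0$.

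I do not expect any genuine obstacle here, since both ingredients are elementary and the graph-theoretic half is supplied verbatim by Lemma~\ref{RACGdefn3.2}. The only point needing a line of care is the identification of $W_{\{a,b\}}$ with the infinite dihedral group, but this is precisely the observation already exploited in the proof of Proposition~\ref{RACG5.31}, so it can be quoted rather than re-derived.
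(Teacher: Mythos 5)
Your proof is correct and takes essentially the same route as the paper: both directions factor through the equivalence ``$\Gamma$ is a clique if and only if $dim_{CC}(\Gamma)=0$'' from Lemma~\ref{RACGdefn3.2}, with the infinite dihedral parabolic $W_{\{a,b\}} \cong \mathbb{Z}_2 \ast \mathbb{Z}_2$ driving the forward implication. The only (cosmetic) difference is that you conclude $W_\Gamma$ is infinite directly from the existence of an infinite subgroup and unfold the clique $\Rightarrow (\mathbb{Z}_2)^n \Rightarrow$ finite step, whereas the paper derives a contradiction via asymptotic dimension ($asdim W_\Gamma > 0$ against finiteness) and cites Proposition~\ref{RACG5.31} for the converse; your version is, if anything, slightly more elementary.
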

\begin{proof} Suppose that $W_\Gamma$ is finite. Then $\Gamma$ is a clique, indeed, otherwise $W_\Gamma$ contains $\mathbb{Z}_2 \ast \mathbb{Z}_2$ as a parabolic subgroup, so $asdimW_\Gamma >0$. Which is a contradiction.
Since $\Gamma$ is not a clique by lemma \ref{RACGdefn3.2} we obtain $dim_{CC}(\Gamma)=0$.

The other direction follows by the previous proposition.
\end{proof}

When $\Gamma$ is connected and has clique-connected dimension one, the graph looks like a ''thick'' tree. 

\begin{prop}\label{RACG6.3}
Let $W_\Gamma$ be the Right-Angled Coxeter group with connected defining graph $\Gamma$. Then $W_\Gamma$ is virtually free if and only if $dim_{CC}(\Gamma)=1$.
\end{prop}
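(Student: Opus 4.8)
The plan is to prove both implications by induction on $\sharp V(\Gamma)$, using the graph-of-groups splitting from Theorem \ref{RACGthm5.1} for the hard direction and Proposition \ref{RACG5.3} for the easy one. Throughout I read ``virtually free'' in the infinite sense consistent with Corollary \ref{RACGCor5}, which already pins down the finite case as exactly $dim_{CC}(\Gamma)=0$.

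For the forward implication I would actually prove the slightly stronger statement that $dim_{CC}(\Gamma)\le 1$ implies $W_\Gamma$ virtually free, which makes the induction self-contained (the base case $\sharp V(\Gamma)=1$ gives $W_\Gamma=\mathbb{Z}_2$, finite and hence virtually free). Assuming $dim_{CC}(\Gamma)=1$ and that $\Gamma$ is not a clique, choose a minimal vertex cut $S$ with components $E_1,\dots,E_k$ ($k\ge 2$) of $\Gamma\setminus FS^\Gamma(S)$, and invoke the amalgam decomposition of Theorem \ref{RACGthm5.1} expressing $W_\Gamma$ as the fundamental group of a finite graph of groups with vertex groups $W_{\overline{E}_i}$ and edge groups isomorphic to $W_{FS^\Gamma(S)}$. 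By Theorem \ref{RACGthm4}, $dim_{CC}(FS^\Gamma(S))<dim_{CC}(\Gamma)=1$, so $dim_{CC}(FS^\Gamma(S))=0$; by Lemma \ref{RACGdefn3.2} the graph $FS^\Gamma(S)$ is a clique, whence each edge group $W_{FS^\Gamma(S)}$ is \emph{finite}. Each $\overline{E}_i$ is a proper full subgraph (since $k\ge 2$) with $dim_{CC}(\overline{E}_i)\le 1$ by Lemma \ref{RACGdefn3.4}, so by the inductive hypothesis every vertex group $W_{\overline{E}_i}$ is virtually free (and finite, hence virtually free, when $dim_{CC}(\overline{E}_i)=0$).

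It then remains to promote this to virtual freeness of $W_\Gamma$, and this is the step where I expect the only genuine care to be needed. The key fact is the closure property that the fundamental group of a finite graph of groups with virtually free vertex groups and finite edge groups is again virtually free. I would deduce it from the Karrass--Pietrowski--Solitar characterization that a finitely generated group is virtually free if and only if it is the fundamental group of a finite graph of finite groups: since each $W_{\overline{E}_i}$ already admits such a splitting and every edge group $W_{FS^\Gamma(S)}$ is finite, one refines the splitting of $W_\Gamma$ by substituting these finite-graph-of-finite-groups decompositions at the vertices, obtaining $W_\Gamma$ itself as a finite graph of finite groups. Hence $W_\Gamma$ is virtually free. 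The care lies in checking that the refined object is still a finite graph of finite groups and that the edge maps remain compatible, but no computation is involved.

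For the converse, suppose $W_\Gamma$ is infinite and virtually free, and assume for contradiction that $dim_{CC}(\Gamma)\ge 2$. By Proposition \ref{RACG5.3} some parabolic subgroup $W_G$ of $W_\Gamma$ is one-ended. However, a subgroup of a virtually free group is virtually free, since its intersection with a finite-index free subgroup is free by Nielsen--Schreier and of finite index; and an infinite virtually free group has at least two ends, so it can never be one-ended. This contradiction forces $dim_{CC}(\Gamma)\le 1$. Finally, as $W_\Gamma$ is infinite, $\Gamma$ is not a clique, so $dim_{CC}(\Gamma)\ge 1$ by Lemma \ref{RACGdefn3.2}; therefore $dim_{CC}(\Gamma)=1$, completing the proof.
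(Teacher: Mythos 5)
Your proof is correct in substance but follows a genuinely different route from the paper's. The paper settles both directions with asymptotic-dimension arguments: for $dim_{CC}(\Gamma)=1$ it invokes Proposition \ref{RACG5.31} to get $asdim W_\Gamma = 1$ and then quotes Gentimis' theorem that a finitely presented group of asymptotic dimension one is virtually free; for the converse it notes that a one-ended parabolic subgroup (Proposition \ref{RACG5.3}) would force $asdim W_\Gamma \geq 2$, contradicting the fact that virtually free groups have asymptotic dimension one. You bypass asymptotic dimension entirely: your forward direction rebuilds the graph-of-groups decomposition of Theorem \ref{RACGthm5.1} inductively and closes it with the Karrass--Pietrowski--Solitar characterization (finite graph of finite groups), and your converse uses only Nielsen--Schreier and the fact that a virtually free group is never one-ended. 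What this buys: your argument is independent of the deep asdim-one characterization of finitely presented virtually free groups, and it produces an explicit decomposition of $W_\Gamma$ as a finite graph of finite groups, which is more structural information than the paper extracts. The paper's proof, in exchange, is much shorter given the results it has already established.

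One step of yours needs repair. You conclude that $FS^\Gamma(S)$ is a clique from $dim_{CC}(FS^\Gamma(S))=0$ via Lemma \ref{RACGdefn3.2}, but that lemma is stated for \emph{connected} graphs, and the full subgraph spanned by a vertex cut can be disconnected: in a $4$-cycle the cut consisting of two opposite vertices spans a disconnected graph, and a disconnected graph whose components are cliques also has $dim_{CC}=0$. This matters for you: if $FS^\Gamma(S)$ were, say, two disjoint cliques, the edge group would be an infinite free product of finite groups, and your Karrass--Pietrowski--Solitar closure argument genuinely requires \emph{finite} edge groups (virtually free edge groups do not suffice, as $\mathbb{Z}\ast_{\mathbb{Z}}\mathbb{Z}$-type examples show). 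The fix comes from minimality of $S$ rather than from Lemma \ref{RACGdefn3.2}: since $\Gamma$ is connected and not a clique, $CC(\Gamma)\geq 1$, while $CC(\Gamma)\leq dim_{CC}(\Gamma)=1$, so the minimal vertex cut satisfies $m_C(FS^\Gamma(S))=CC(\Gamma)=1$; a graph whose clique twin consists of a single clique is itself that clique by condition (ii) of Definition \ref{RACGdefn2.1}. With this substitution your argument is complete.
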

\begin{proof} We assume that $W_\Gamma$ is virtually free. If $dim_{CC}(\Gamma) \geq 2$, then by proposition \ref{RACG5.3} $W_\Gamma$ contains an one-ended parabolic subgroup. Since one-ended groups have asymptotic dimension at least two (see \cite{Ge}) we have that $asdimW_\Gamma \geq 2$. By the fact that the asymptotic dimension of virtually free grous is one (see \cite{Ge}) we have a contradiction.\\
If $dim_{CC}(\Gamma) =0$, then $\Gamma$ is a clique. In that case, $W_\Gamma$ is finite, which is a contradiction.

Suppose that $dim_{CC}(\Gamma)=1$, then by proposition \ref{RACG5.31} we have $asdim W_\Gamma = 1$. Applying Gentimis' theorem for virtually free groups (see \cite{Ge}), we conclude that $W_\Gamma$ is virtually free.

\end{proof}

We obtain as a corollary the following:

\begin{prop}\label{RACG6.4}
Let $W_\Gamma$ be the Right-Angled Coxeter group with connected defining graph $\Gamma$. Then $asdim(W_\Gamma) \geq 2$ if and only if $dim_{CC}(\Gamma)\geq2$.
\end{prop}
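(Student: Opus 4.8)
The plan is to prove the two implications separately, leaning on results already established; the statement is essentially a reorganisation of Theorem \ref{RACGthm5.1} and Proposition \ref{RACG5.3}, packaged together with Gentimis' theorem. The overarching picture is the trichotomy recorded in Corollary \ref{RACGCor5} and Propositions \ref{RACG6.3} and \ref{RACG5.31}: the cases $dim_{CC}(\Gamma)=0$, $=1$, $\geq 2$ correspond exactly to $asdim W_\Gamma = 0,1,\geq 2$. This proposition simply isolates the threshold at $2$.

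For the forward implication I would argue by contraposition. Suppose $dim_{CC}(\Gamma)<2$, that is $dim_{CC}(\Gamma)\leq 1$. Since $\Gamma$ is connected, Theorem \ref{RACGthm5.1} yields $asdim W_\Gamma \leq dim_{CC}(\Gamma)\leq 1$, so $asdim W_\Gamma < 2$. Equivalently, $asdim W_\Gamma \geq 2$ forces $dim_{CC}(\Gamma)\geq 2$, which is exactly the ''only if'' direction.

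For the backward implication, assume $dim_{CC}(\Gamma)\geq 2$. By Proposition \ref{RACG5.3}, $W_\Gamma$ contains a one-ended standard parabolic subgroup $W_G$. Standard parabolic subgroups of Coxeter groups are isometrically embedded in the whole group, so asymptotic dimension is monotone under this inclusion, giving $asdim W_G \leq asdim W_\Gamma$. By Gentimis' theorem (see \cite{Ge}), every one-ended finitely generated group has asymptotic dimension at least two, hence $2 \leq asdim W_G \leq asdim W_\Gamma$. This establishes $asdim W_\Gamma \geq 2$ and completes the equivalence.

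The only point requiring care, and hence the main (minor) obstacle, is the inequality $asdim W_G \leq asdim W_\Gamma$: it depends on the fact that a standard parabolic subgroup of a RACG is isometrically (hence quasi-isometrically) embedded, so that $W_G$ is an isometric subspace of $W_\Gamma$ and asymptotic dimension can only decrease. This same fact is already invoked implicitly in the proof of Proposition \ref{RACG5.31}, so no new input is needed beyond citing it.
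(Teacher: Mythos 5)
Your proposal is correct, and the forward implication (contraposition via Theorem \ref{RACGthm5.1}) is exactly the paper's argument. For the backward implication, however, you take a genuinely different route. The paper argues by exclusion: it invokes Proposition \ref{RACG6.3} together with Gentimis' characterisation (a finitely presented group has asymptotic dimension one if and only if it is virtually free) to rule out $\asdim W_\Gamma = 1$, and Corollary \ref{RACGCor5} to rule out $\asdim W_\Gamma = 0$, concluding $\asdim W_\Gamma \geq 2$. You instead go straight to Proposition \ref{RACG5.3} to produce a one-ended parabolic subgroup $W_G$, quote Gentimis to get $\asdim W_G \geq 2$, and transfer this lower bound to $W_\Gamma$ by monotonicity of asymptotic dimension for (isometrically embedded) standard parabolic subgroups. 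Your argument is essentially the one the paper itself uses for the case $dim_{CC}(\Gamma)=2$ inside Proposition \ref{RACG5.31}, generalised to all $dim_{CC}(\Gamma)\geq 2$; it is more direct, bypasses the virtually-free characterisation entirely, and has the virtue of making explicit the subgroup-monotonicity step that the paper leaves implicit. The paper's route, by contrast, reuses already-packaged equivalences and never has to state that monotonicity fact in the proof of this proposition. One small correction: Gentimis' theorem concerns finitely \emph{presented} groups, so your phrase ``every one-ended finitely generated group has asymptotic dimension at least two'' overstates it; this costs nothing here, since $W_G$ is a RACG and hence finitely presented, but the hypothesis should be quoted accurately.
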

\begin{proof}We suppose that $asdim(W_\Gamma) \geq 2$, then by theorem \ref{RACGthm5.1} we have that $dim_{CC}(\Gamma)\geq2$.

Conversely, we assume that $dim_{CC}(\Gamma) \geq 2$, then by the previous proposition and the fact that the only groups having asymptotic dimension one are the virtually free groups (see \cite{Ge}) we have that $asdim(W_\Gamma) \neq 1$. Obviously, $asdim(W_\Gamma) \neq 0$, otherwise we have a contradiction by corollary \ref{RACGCor5}.

\end{proof}

Observe that proposition \ref{RACG6.3} and corollary \ref{RACGCor5} can be rephrased as follows:

\begin{center}
Corollary \ref{RACGCor5}: $asdimW_\Gamma=0$ if and only if $dim_{CC}(\Gamma)=0$.
\end{center}

\begin{center}
Proposition \ref{RACG6.3}: $asdimW_\Gamma=1$ if and only if $dim_{CC}(\Gamma)=1$.
\end{center}

We know by proposition \ref{RACG5.31} that if $dim_{CC}(\Gamma)=2$, then $asdimW_\Gamma=2$. One may ask whether the converse is true. We note that by the previous proposition if $asdim(W_\Gamma) = 2$, then $dim_{CC}(\Gamma)\geq2$.\\
\textbf{Question.} \emph{Is there any connected graph such that the RACG defined by the this graph has asymptotic dimension two while the clique connected dimension of the graph is greater than two?}\\

The answer is yes.
We will construct a graph $X$ with clique connected dimension equal to four while $asdimW_X=2$. Let $X_1$, $X_2$ and $X_3$ be 4-cycles with vertices $\lbrace v_1^1,\ldots,v^1_4 \rbrace, \lbrace v_1^2,\ldots,v^2_4\rbrace$ and $\lbrace v_1^3,\ldots,v^3_4 \rbrace$. We join the vertices $v^i_j,v^{i+1}_j$ with edges. The resulting graph $X$ has clique connected dimension equal to four. The graph $X$ is actually the 1-skeleton of a cube complex, thus $Sim(X)=2$.
By Dranishnikov's upper bound (see \cite{Dra08}) and the fact that $W_{X}$ is one ended we obtain that $asdimW_{X}=2$.

Thus an analogue of corollary \ref{RACGCor5} and proposition \ref{RACG6.3} for asymptotic dimension two doesn't exist.
However, we have the following:
\begin{prop}\label{RACG6.5}
Let $W_\Gamma$ be the Right-Angled Coxeter group with connected defining graph $\Gamma$. If $asdimW_\Gamma=2$, then there exists a full subgraph $G$ of $\Gamma$ such that $dim_{CC}(G)=2$ and $asdimW_G =2$.
\end{prop}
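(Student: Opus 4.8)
The plan is to locate inside $\Gamma$ a full subgraph that is literally a $k$-cycle; this single subgraph will witness both required conclusions at once. First I would extract the combinatorial consequence of the hypothesis: since $asdim W_\Gamma = 2 \geq 2$, proposition \ref{RACG6.4} gives $dim_{CC}(\Gamma) \geq 2$, so by definition \ref{RACGdefn2.4} there is at least one full subgraph of $\Gamma$ carrying $CC \geq 2$. Among all full subgraphs of $\Gamma$ with $CC \geq 2$ I would then choose one, call it $G$, that is minimal with respect to inclusion. As in the proof of proposition \ref{RACG5.3}, minimality forces $G$ to be connected, since $CC$ of a disconnected graph is the minimum over its components, so otherwise a single component would already be a strictly smaller full subgraph with $CC \geq 2$.

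Next I would pin down $G$ exactly. By minimality, every proper full subgraph $G^\prime$ of $G$ satisfies $CC(G^\prime) \leq 1$, so lemma \ref{RAACGs3.6} applies and identifies $G$ as a $k$-cycle with $k \geq 4$. I would then compute $dim_{CC}(G)$ exactly rather than merely bounding it below. The only minimal separating sets of a $k$-cycle are pairs of non-adjacent vertices (a single vertex never separates, two adjacent vertices leave a path, and no three-element minimal cut can exist because $k$-cycles with $k\geq 4$ are triangle-free), and the full subgraph spanned by two non-adjacent vertices is two isolated points, whose clique twin consists of two cliques. Hence every vertex cut $S$ gives $m_C(FS^G(S)) = 2$, so by definition \ref{RACGdefn2.3} we get $CC(G) = 2$. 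Combining this with the fact that all proper full subgraphs of $G$ have $CC \leq 1$, definition \ref{RACGdefn2.4} yields $dim_{CC}(G) = CC(G) = 2$.

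Finally I would establish $asdim W_G = 2$ by squeezing it between matching bounds. For the lower bound, $W_G$ is the right-angled Coxeter group of a $k$-cycle with $k \geq 4$, which is one-ended by theorem 8.7.2 of \cite{Davis} (exactly the input used in proposition \ref{RACG5.3}); since one-ended groups have asymptotic dimension at least two (see \cite{Ge}), this gives $asdim W_G \geq 2$. For the upper bound, $G$ is connected, so the main theorem \ref{RACGthm5.1} gives $asdim W_G \leq dim_{CC}(G) = 2$. Therefore $asdim W_G = 2$, and the same $G$ satisfies both required conclusions.

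The step needing the most care is the exact evaluation $dim_{CC}(G) = 2$, as opposed to the easy $dim_{CC}(G) \geq 2$: the hypothesis $asdim W_\Gamma = 2$ only enters through $dim_{CC}(\Gamma) \geq 2$, so the whole argument rests on lemma \ref{RAACGs3.6} forcing the minimal $CC$-witness to be a cycle, whose clique-connected dimension is thereby pinned to exactly $2$ and whose Coxeter group is one-ended. I would also verify that $G$ is genuinely a full subgraph of $\Gamma$, which it is by construction, so that monotonicity and the parabolic description of $W_G$ are legitimate. I note that the argument in fact uses only $asdim W_\Gamma \geq 2$, so the statement remains valid verbatim whenever $dim_{CC}(\Gamma) \geq 2$.
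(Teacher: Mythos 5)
Your proposal is correct and follows essentially the same route as the paper: deduce $dim_{CC}(\Gamma)\geq 2$ from the main theorem, extract a minimal full subgraph with $CC\geq 2$ which lemma \ref{RAACGs3.6} identifies as a $k$-cycle ($k\geq 4$) exactly as in the proof of proposition \ref{RACG5.3}, and then pin down $dim_{CC}(G)=2$ and $asdim W_G=2$ via one-endedness plus the upper bound of theorem \ref{RACGthm5.1}. The only difference is that you carefully justify the steps the paper compresses into ``by the proof of proposition \ref{RACG5.3}'' and ``trivially, $dim_{CC}(G)=2$'' (the classification of vertex cuts of a cycle and the squeeze argument), which is a welcome expansion rather than a new method.
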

\begin{proof}By theorem \ref{RACGthm5.1}, $dim_{CC}(\Gamma)\geq 2$. By the proof of proposition \ref{RACG5.3}, $\Gamma$ contains a $k$-cylce $G$ as a full subgraph ($k \geq 4 $). Trivially, $dim_{CC}(G)=2$.

\end{proof}




\textit{E-mail}: panagiotis.tselekidis@queens.ox.ac.uk


\textit{Address:} Mathematical Institute, University of Oxford, Andrew Wiles Building, Woodstock Rd, Oxford OX2 6GG, U.K.



\begin{thebibliography}{99}




\bibitem{BD04} G.Bell, and A.Dranishnikov, {\em On asymptotic dimension of groups acting on trees}.  Geometriae Dedicata, 103 (2004), 89-101





  
\bibitem{BDK} G.C.Bell, A. N. Dranishnikov, and J. E. Keesling, {\em On a formula for the asymptotic dimension of free products}, Fundamenta Mathematicae 183.1 (2004): 39-45.   
 
  

\bibitem{Davis} Michael W. Davis, The geometry and topology of Coxeter groups, London Mathematical Society Monographs Series, vol. 32, Princeton University Press, Princeton, NJ,
2008. MR 2360474




\bibitem{Dra.Cohom} A. Dranishnikov, {\em  Cohomological approach to asymptotic dimension}. Geom Dedicata 141, 59 (2009). https://doi.org/10.1007/s10711-008-9343-0

\bibitem{Dra08} A. Dranishnikov, {\em On asymptotic dimension of amalgamated products and right-angled Coxeter groups}, Algebr. Geom. Topol. 8 (2008), no 3, 1281-1293




\bibitem{EM} Elements de mathematique. Fasc. XXXIV. Groupes et algebres de Lie. Chapitre IV:
Groupes de Coxeter et systemes de Tits. Chapitre V: Groupes engendres par des reflexions. Chapitre
VI: Systemes de racines. Actualites Scientifiques et Industrielles, No. 1337, Hermann, Paris, 1968.



\bibitem{Ge} Thanos Gentimis, {\em Asymptotic dimension of finitely presented groups}, Proceedings of the American Mathematical Society 136, no. 12 (2008): 4103-4110.



 
\bibitem{TJ} Tadeusz Januszkiewicz, For Coxeter groups $z^{|g|} $ is a coefficient of a uniformly bounded
representation, Fund. Math. 174 (2002), no. 1, 79–86. MR MR1925487 (2003f:20061)















  

\bibitem{PT} Panagiotis Tselekidis, {\em Asymptotic Dimension of Graphs of Groups and One Relator Groups}, (2019), Arxiv: 1905.07925



\end{thebibliography}
\end{document}